\newtheorem{theorem}{Theorem}[section]
\newtheorem{lemma}[theorem]{Lemma}
\newtheorem{proposition}[theorem]{Proposition}
\newtheorem{remark}[theorem]{Remark}
\numberwithin{equation}{section}
\newtheorem{definition}[theorem]{Definition}
\newtheorem{corollary}[theorem]{Corollary}
\begin{document}
\title[Existence of Ulrich Bundle on general Surfaces with positive canonical bundle ]{Existence of Ulrich Bundle on some surfaces of general type}
\author{SURATNO BASU}
\email{suratno.b@srmap.edu.in, suratno.math@gmail.com}
\address{SRM University, Amaravathy,
         Andhra Pradesh, India}
\author{SARBESWAR PAL}
\email{sarbeswar11@gmail.com, spal@iisertvm.ac.in}
\address{IISER Thiruvananthapuram, Maruthamala P. O., Kerala 695551}

\keywords{Ulrich Bunle, Cayley Bacharach Points}
\subjclass[2010]{14J60}

\begin{abstract}
 Let $X$ be a  smooth projective algebraic surface of Picard rank one with very ample canonical bundle $K_X$. We further assume that  $q +1 \le \chi(\mathcal{O}_X)$.  
 In this article, we will study the existence of an Ulrich bundle and its stability property of it with respect to $K_X$.
\end{abstract}

\maketitle

\section{Introduction}

Let $X \subset \mathbb{P}^n$ be a smooth projective variety. 
Then $X$ is
naturally polarised by the very ample line bundle $h:= \mathcal{O}_X(1):= \mathcal{O}_{\mathbb{P}^n}(1) \otimes \mathcal{O}_X$.\\
A vector bundle $E$ on $X$ is called Ulrich with respect to $\mathcal{O}_X(1)$, if 
 \[
  h^i(X, E(-i))= h^j(X, E(-(j+1))= 0,
 \]
for each $i > 0$ and $j < \text{dim}(X)$. The study of such bundles started in 80's  by Ulrich  \cite{UL} in Commutative Algebra context  with different name. 
The study of Ulrich bundles in Algebraic Geometry context was started by Eisenbud and Schreyer \cite{ES}.  
Recently, this has  become an active research  area in Algebraic Geometry. The study of Ulrich bundles is closely related to the question: 
whether X can be defined set-theoretically by
a linear determinant and also it has nice applications in  investigating various  geometric questions.  Noting the powerful applications of Ulrich bundles,  
the first fundamental question asked by Eisenbud and Schreyer \cite{ES}.\\

{\bf Question}:  Does every embedded variety $X \subset \mathbb{P}^n$ have an Ulrich sheaf? If X has an Ulrich
sheaf, what is the smallest possible rank for such a sheaf?\\

 When $X$ is a smooth projective curve the the above question has an easy affirmative answer. In fact, in this case, $X$ supports an Ulrich line bundle.\\
 When $X$ has dimension $> 1$, then there is no general result known of the  above question. 
 However, some partial results are known. 
 For interested reader we refer to \cite{AB1}, \cite{AB2}, \cite{AB3}, \cite{GC1}, \cite{GC2}, \cite{GC3}, \cite{GC4}, \cite{GC5}, \cite{AFO}, \cite{AR}, \cite{PP}
 and the references there in.\\
 In this article, we will study the question when $X$ is a smooth projective surface with some extra hypothesis. More precisely we prove the following:
 \begin{theorem}\label{T1}
 Let $X$ be a smooth projective surface over complex numbers with $\text{Pic}(X) = \mathbb{Z}[K_{_X}]$ where $K_X$ is the canonical bundle. Further assume that $K_X$ is very ample and 
 $q(X):= h^1(\mathcal{O}_X) < \chi(\mathcal{O}_{_X})$-1. Then $X$ supports a Ulrich bundle of rank $2$ and such a bundle is  necessarily stable. 
 \end{theorem}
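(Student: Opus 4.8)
Write $d=K_X^2$, $\chi=\chi(\mathcal O_X)$ and $q=q(X)$, and recall that very ampleness of $K_X$ makes $X$ of general type, so Kodaira vanishing applies and the cohomology of every power $K_X^{\otimes j}$ is explicit. The plan is to realise the bundle as a rank-$2$ bundle $E$ with $c_1(E)=4K_X$ and $c_2(E)=2\chi+4d$. These values are forced: the Ulrich vanishing implies $\chi(E(-1))=\chi(E(-2))=0$, and combined with $\operatorname{Pic}(X)=\mathbb Z[K_X]$ this pins down $c_1(E)=4K_X$ and $c_2(E)=2\chi+4d$ by Riemann--Roch.

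First I would reformulate the Ulrich conditions. Since $\operatorname{rk}E=2$ and $\det E=K_X^{\otimes 4}$ one has $E^\vee\cong E(-4)$, so $G:=E(-2)$ is self-dual, $G^\vee\cong G$. Serre duality then gives $H^i(E(-1))\cong H^{2-i}(E(-2))^\vee$, so the vanishings in the definition collapse to the single requirement $H^\bullet(G)=0$; as $\chi(G)=0$ this is just $H^0(G)=H^2(G)=0$. I would build $E$ by the Serre construction from a $0$-dimensional subscheme $Z$,
\[
0\longrightarrow \mathcal O_X\longrightarrow E\longrightarrow \mathcal I_Z\otimes K_X^{\otimes 4}\longrightarrow 0,
\]
of length $\operatorname{len}(Z)=c_2(E)=2\chi+4d$. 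Twisting, $G=E(-2)$ and $G(1)=E(-1)$ sit in extensions with sub-line-bundles $K_X^{-2}$ and $K_X^{-1}$, whose $H^0$ and $H^1$ both vanish; hence $H^0(G)=H^0(\mathcal I_Z\otimes K_X^{\otimes 2})$ and $H^2(G)\cong H^0(\mathcal I_Z\otimes K_X^{\otimes 3})^\vee$, and since $p_g>0$ a nonzero section of $K_X$ shows the second vanishing implies the first. Thus the whole Ulrich property reduces to $E$ being locally free together with
\[
H^0(X,\mathcal I_Z\otimes K_X^{\otimes 3})=0 .
\]

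By the Serre/Cayley--Bacharach correspondence, a locally free $E$ as above exists exactly when $Z$ is a local complete intersection satisfying the Cayley--Bacharach property for $|5K_X|$. So everything rests on producing a reduced $0$-cycle $Z$ of length $2\chi+4d$ that is simultaneously Cayley--Bacharach for $|5K_X|$ \emph{and} imposes independent conditions on $|3K_X|$ (forcing the displayed vanishing; this is numerically feasible because $2\chi+4d\ge \chi+3d=h^0(3K_X)$). This is the heart of the matter and the step I expect to be the main obstacle, since the two demands pull against each other: independent conditions on cubics want $Z$ generic, while Cayley--Bacharach for quintics wants $Z$ special. I would resolve it by specialising $Z$ to a smooth curve $C\in|4K_X|$ (Bertini): adjunction gives $\omega_C=5K_X|_C$, and the restriction sequence $0\to K_X\to 5K_X\to\omega_C\to 0$ with $H^1(5K_X)=0$ shows that the quintics cut out a subsystem $W\subseteq|\omega_C|$ of codimension exactly $q=h^1(K_X)$. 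On $C$ the Cayley--Bacharach condition for $|5K_X|$ becomes Cayley--Bacharach for the codimension-$q$ subsystem $W$ of $|\omega_C|$, which a Riemann--Roch/Brill--Noether count on $C$ converts into an inequality comparing $\operatorname{len}(Z)=2\chi+4d$, the genus of $C$, and the defect $q$. The hypothesis $q<\chi-1$ is precisely what makes this count favourable while still leaving $Z$ general enough that $H^0(\mathcal I_Z\otimes K_X^{\otimes 3})=0$; verifying this compatibility carefully is the crux.

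Finally, stability is the easy part and holds for \emph{any} rank-$2$ Ulrich bundle. Such an $E$ has slope $\mu(E)=c_1(E)\cdot K_X/2=2d$, and a destabilising subsheaf may be saturated to a sub-line-bundle $K_X^{\otimes t}\hookrightarrow E$ with $td=\mu(K_X^{\otimes t})\ge\mu(E)=2d$, i.e.\ $t\ge 2$. But $K_X^{\otimes t}\hookrightarrow E$ means $H^0(E\otimes K_X^{-t})\ne 0$, whereas $H^0(E(-2))=0$ since $E$ is Ulrich; multiplying by a nonzero section of $K_X^{\otimes(t-2)}$ (which exists for $t\ge 2$ as $K_X$ is very ample) gives an injection $H^0(E\otimes K_X^{-t})\hookrightarrow H^0(E(-2))=0$, so no such sub-line-bundle exists. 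Hence $E$ is stable.
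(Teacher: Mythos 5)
Your preparatory reductions are correct and agree with the paper's: $\operatorname{Pic}(X)=\mathbb{Z}[K_X]$ plus Riemann--Roch forces $c_1(E)=4K_X$ and $c_2(E)=4K_X^2+2\chi$; the self-duality of $E(-2)$ collapses the Ulrich vanishings to a single $H^0$-condition; and for a Serre-construction bundle everything reduces to finding a local complete intersection cycle $Z$ of length $4K_X^2+2\chi$ that is Cayley--Bacharach for $|5K_X|$ and satisfies $H^0(\mathcal{I}_{Z,X}\otimes K_X^{\otimes 3})=0$. Your stability argument is complete and is essentially the paper's remark following Corollary \ref{c1}. But the proof has a genuine gap at exactly the point you flag: you never produce such a $Z$. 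The specialization to a smooth $C\in|4K_X|$, the identification of CB for $|5K_X|$ with CB for the codimension-$q$ subsystem $W\subseteq|\omega_C|$, and ``a Riemann--Roch/Brill--Noether count'' constitute a plan, not an argument: no inequality is stated, let alone proved, and no mechanism is offered that makes a divisor Cayley--Bacharach for $W$ while simultaneously keeping it off every divisor of $|3K_X|$. Since you yourself concede that ``verifying this compatibility carefully is the crux,'' the proposal as written does not establish existence, which is the entire content of the theorem.

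It is instructive to see how the paper evades precisely the tension you identify: it never exhibits a single cycle with both properties. Lemma \ref{L1} constructs $\Gamma$ with \emph{only} the CB property, via the Hotchkiss--Lau--Ullery theorem (Theorem \ref{T4}): a general reduced member of a base-point-free pencil on a smooth $C\in|\mathcal{O}_X(4)|$ is CB with respect to $K_C=\mathcal{O}_X(5)|_C$; the real work, and the place where $q<\chi-1$ first enters, is manufacturing a base-point-free line bundle of the exact degree $4h^2+2\chi$ with two sections, of the form $\mathcal{O}_X(2)|_C\otimes\mathcal{O}_C(-Z')$ for a carefully chosen $Z'$ imposing independent conditions on $|\mathcal{O}_X(4)|$. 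The vanishing $h^0(E(-1))=0$ is then obtained not for this particular bundle but for a \emph{general deformation} of it: by Mestrano--Simpson every component of the family of such Serre bundles has dimension at least $2h^2+5\chi-1$, its general member is stable (Proposition \ref{p2}, a separate dimension count where $q<\chi-1$ is used again), while the locus of stable bundles admitting a section has dimension at most $2h^2+4\chi$ (Proposition \ref{p3}); since $\chi\ge 2$, the general member is initialized and hence Ulrich by Corollary \ref{c1}. So to salvage your approach you must either actually carry out the Brill--Noether count and resolve the genericity conflict, or adopt the paper's decoupling: prove CB by construction, and obtain the $H^0$-vanishing generically by dimension counts in the moduli space.
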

 
 Note that under the hypothesis of Theorem \ref{T1} there does not exist any Ulrich line bundle on $X$ with respect to $K_X$.
 Suppose, $L=\mathcal{O}_{_X}(k)$. If $k\geq 1$ then $H^0(X,\mathcal{O}_{_X}(k-1))\neq 0$. If $k\leq 0$ then 
 $H^2(X,\mathcal{O}_{_X}(k-2))=H^0(X,\mathcal{O}_{_X}(-k+2+1))\neq 0$. Therefore, no line bundles on $X$ can be Ulrich with respect to $K_{_X}$.

Once that the existence of Ulrich bundles of low ranks is proved, one could be
interested in understanding how large a family of Ulrich bundles supported on $X$
can actually be. In particular we say that a smooth variety $X \subset \mathbb{P}^n$ is Ulrich–wild
if it supports families  of pairwise non–isomorphic, indecomposable,
Ulrich bundles of  arbitrary dimension.
Our next Theorem deals with the question of Ulrich wildness of the surface. More precisely, we prove the following:
\begin{theorem}\label{T2}
Let $X$ be a smooth projective surface over complex numbers which satisfies the hypothesis of Theorem \ref{T1}. Then  $X$ is Ulrich wild.
\end{theorem}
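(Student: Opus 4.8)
The plan is to reduce the statement to a purely representation-theoretic criterion for wildness. Recall that on the surface $X \subset \mathbb{P}^N$ embedded by $K_X$, a bundle $E$ is Ulrich precisely when $H^\bullet(X, E(-1)) = H^\bullet(X, E(-2)) = 0$; since these are cohomology-vanishing conditions, the long exact sequence in cohomology shows that an extension of two Ulrich bundles is again Ulrich. Consequently, if one can exhibit two simple Ulrich bundles $E_1, E_2$ that are orthogonal in the sense $\mathrm{Hom}(E_1,E_2) = \mathrm{Hom}(E_2,E_1) = 0$ and satisfy $\dim \mathrm{Ext}^1(E_1,E_2) \ge 3$, then forming iterated extensions of copies of $E_1$ and $E_2$ yields a faithful, exact functor from the category of representations of the $3$-Kronecker quiver into the category of Ulrich bundles on $X$. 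Using orthogonality and simplicity one checks that this functor preserves indecomposability and isomorphism classes, and since the $3$-Kronecker quiver is of wild representation type, the Ulrich wildness of $X$ follows. Thus the whole theorem reduces to producing such a pair $(E_1,E_2)$.

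For the pair, I would use the construction underlying Theorem \ref{T1}. That construction realises a rank-$2$ Ulrich bundle through a Serre-type Cayley--Bacharach extension built from a suitable zero-dimensional subscheme, and varying this datum yields a positive-dimensional family of stable rank-$2$ Ulrich bundles, all with the same Chern classes (these are forced by the Ulrich condition). Choosing two non-isomorphic members $E_1, E_2$ of this family, both are stable of the same slope with respect to $K_X$; hence each is simple, and any nonzero homomorphism $E_1 \to E_2$ or $E_2 \to E_1$ would be an isomorphism, which gives the required orthogonality $\mathrm{Hom}(E_1,E_2) = \mathrm{Hom}(E_2,E_1) = 0$.

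It then remains to bound $\mathrm{Ext}^1(E_1,E_2)$ from below. Here I would compute the Euler characteristic $\chi(E_1,E_2) = \sum_i (-1)^i \dim \mathrm{Ext}^i(E_1,E_2)$ by Hirzebruch--Riemann--Roch, using that $E_1, E_2$ have identical, explicitly known Chern classes. Since $\mathrm{Hom}(E_1,E_2)=0$, this gives $\dim\mathrm{Ext}^1(E_1,E_2) = \dim\mathrm{Ext}^2(E_1,E_2) - \chi(E_1,E_2)$, and by Serre duality $\mathrm{Ext}^2(E_1,E_2) \cong \mathrm{Hom}(E_2, E_1 \otimes K_X)^\vee$. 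As the $\mathrm{Ext}^2$ term is non-negative, one obtains the one-sided estimate $\dim\mathrm{Ext}^1(E_1,E_2) \ge -\chi(E_1,E_2)$, and I expect the hypotheses --- very ampleness of $K_X$ (forcing $K_X^2$ large) together with $q(X) < \chi(\mathcal{O}_X) - 1$ --- to make $-\chi(E_1,E_2) \ge 3$.

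The main obstacle lies entirely in this last step and is twofold. First, one must confirm that the family of Ulrich bundles produced in Theorem \ref{T1} is genuinely positive-dimensional, so that two non-isomorphic members actually exist; this should follow from a dimension count on the admissible extension data, but must be verified. Second, and more delicate, is the numerical estimate $\dim\mathrm{Ext}^1(E_1,E_2) \ge 3$: while the Riemann--Roch computation itself is routine, one must keep careful control of the $\mathrm{Ext}^2$ contribution through Serre duality (equivalently, of $\mathrm{Hom}(E_2, E_1\otimes K_X)$) to ensure the lower bound is not spoiled. I expect the inequality $q(X) < \chi(\mathcal{O}_X)-1$ to be precisely what guarantees this bound, mirroring the role it plays in Theorem \ref{T1}.
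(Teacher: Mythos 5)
Your proposal follows essentially the same route as the paper: the paper invokes the Faenzi--Pons-Llopis criterion (Theorem \ref{T3}), which is exactly the quiver/extension argument you sketch, applies it to two non-isomorphic stable Ulrich bundles $A,B$ drawn from the family constructed in Theorem \ref{T1} (positive-dimensionality of that family being guaranteed by the bound from \cite{SIM1} that every component of $\Sigma(X,4h^2+2\chi)$ has dimension at least $2h^2+5\chi-1>0$), and establishes $h^1(A\otimes B^*)\ge 3$ by precisely your one-sided Riemann--Roch estimate. Both obstacles you flag do resolve as you anticipate: since $A$ and $B$ share the Ulrich Chern classes $c_1=4h$, $c_2=4h^2+2\chi$, one computes $c_1(A\otimes B^*)=0$ and $c_2(A\otimes B^*)=8\chi$, hence $\chi(A\otimes B^*)=-4\chi(\mathcal{O}_X)$, and the hypothesis $q<\chi(\mathcal{O}_X)-1$ forces $\chi(\mathcal{O}_X)\ge 2$, so $h^1(A\otimes B^*)\ge 4\chi(\mathcal{O}_X)\ge 8\ge 3$ with room to spare (the $\mathrm{Ext}^2$ term needs no control, exactly as your one-sided bound already indicates).
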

{\bf Sketch of the proof of Theorem \ref{T1}}: Our approach is very simple.  
It is known (Corollary \ref{c1}) that a rank two vector bundle $E$ on a surface $X$ with $\text{Pic}(X)\simeq \mathbb{Z}$ is Ulrich with respect to an ample line bundle $H:= \mathcal{O}_X(1)$, 
if and only if $c_1(E) = 3H + K_X$, where $K_X$ is the canonical bundle , $c_2(E)= \frac{5H^2 + 3H.K_X}{2} + 2 \chi(\mathcal{O}_X)$ and $E$ is initialized. \\
We first show that the moduli space of rank $2$ stable vector bundles with $c_1 = 3H +K_X$ and $c_2= \frac{5H^2 + 3H.K_X}{2}+ 2 \chi(\mathcal{O}_X)$ is non empty. 
Then we will show that a general element in the moduli space is initialized. 

{\bf Organization of the paper}: In Section 2, we will recall some basic facts which we will be using in subsequent sections. \\
In Section 3, we will prove the main Theorem \ref{T1}.\\
In Section we will prove the Ulrich wildness Theorem \ref{T2}.

{\bf Acknowledgement}: We are extremely thankful to Prof. Angelo Felice Lopez for all his comments and suggestions. We have majorly rewritten Lemma $4.1$ and Proposition $4.4$ and fixed certain gaps in the arguments therein following his remarks. We owe a proof of smoothness of certain curves in Lemma 4.1 to him.

\section{Notation} Throughout the paper we will work with the following notations:
\begin{itemize}
 \item All schemes $X$ will be defined over the complex numbers $\mathbb{C}$.
 \item For any projective scheme $X$ by $\chi$ we mean the Euler characteristic $\chi(\mathcal{O}_{_X})$.
 \item For any closed subscheme $Z$ of $X$, $I_{Z,X}$ will denote the ideal sheaf of $Z$ in $X$.
 \item For any sheaf $F$ of $\mathcal{O}_{_X}$-module we write $H^i(F):=H^i(X,F)$ and $h^i(F):=\text{dim}H^i(X,F)$, $i\geq 0$.
 \item For any vector bundle $E$ on $X$, $E^{*}$ will denote the dual of $E$. For any vector bundle $E$ on $X$, $\mu(E)$ denotes it slope with respect 
       to some ample line bundle.
\end{itemize}

\section{Preliminaries}
Let $X \subset \mathbb{P}^n$ be a smooth projective variety. 
Then $X$ is
naturally polarised by the very ample line bundle $h:= \mathcal{O}_X(1):= \mathcal{O}_{\mathbb{P}^n}(1) \otimes \mathcal{O}_X$.
\begin{definition}
 A vector bundle $E$ on $X$ is called Ulrich with respect to $\mathcal{O}_X(1)$, if 
 \[
  h^i(X, E(-i))= h^j(X, E(-(j+1))= 0,
 \]
for each $i > 0$ and $j < \text{dim}(X)$.
\end{definition}

\begin{proposition}\label{p1}
  Let $X$ be a surface endowed with a very ample line bundle $\mathcal{O}_X(1)$.
If $E$ is a vector bundle on $X$, then the following assertions are equivalent:\\
(a) $E$ is Ulrich with respect to $\mathcal{O}_X(1)$;\\
(b) $E^*(K_X +3h)$ is Ulrich with respect to $\mathcal{O}_X(1)$;\\
(c) $E$ is ACM and 

\begin{equation}\label{e1}
\begin{split}
c_1(E)h= \frac{\text{rank}(E)}{2}(3h^2 + K_Xh),\\
 c_2(E)= \frac{c_{_1}(E)^2 - c_1(E).K_X}{2} - \text{rank}(E)(h^2 -\chi(\mathcal{O}_X)).
 \end{split}
\end{equation}

(d) $h^0(E(-h))= h^0(E^*(2h+K_X))= 0$ and the equalities \ref{e1} hold.

\end{proposition}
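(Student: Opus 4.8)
The plan is to unwind the definition on a surface and then cycle through the four assertions using only Serre duality and Riemann--Roch, isolating the one genuinely non-formal input. Since $\dim X = 2$, a bundle $E$ is Ulrich exactly when $h^0(E(-h)) = h^1(E(-h)) = h^1(E(-2h)) = h^2(E(-2h)) = 0$. I would first observe that multiplication by a nonzero section of the very ample bundle $h$ gives injections $E(-2h) \hookrightarrow E(-h)$ and $E^*(K_X+h) \hookrightarrow E^*(K_X + 2h)$; passing to global sections, and using Serre duality in the form $h^2(E(-jh)) = h^0(E^*(K_X + jh))$, shows that $h^0(E(-h)) = 0$ forces $h^0(E(-2h)) = 0$, and that $h^2(E(-2h)) = 0$ forces $h^2(E(-h)) = 0$. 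Hence being Ulrich is equivalent to the symmetric condition $H^i(E(-h)) = H^i(E(-2h)) = 0$ for all $i$, so that the two a priori missing cohomology groups vanish automatically.

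Next I would record the Riemann--Roch computation. Writing $\chi(E(t)) = at^2 + bt + c$ with $a = \tfrac{r}{2}h^2$, $b = c_1(E)\cdot h - \tfrac{r}{2}\,K_X\cdot h$ and $c = r\chi(\mathcal{O}_X) + \tfrac12\bigl(c_1(E)^2 - c_1(E)\cdot K_X\bigr) - c_2(E)$, where $r = \mathrm{rank}(E)$, the two conditions $\chi(E(-h)) = \chi(E(-2h)) = 0$ translate into $b = 3a$ and $c = 2a$, which a direct substitution identifies with the two displayed equalities in \ref{e1}. Thus throughout the argument I may freely replace the numerical hypotheses \ref{e1} by $\chi(E(-h)) = \chi(E(-2h)) = 0$.

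With these preparations the equivalences become quick. For (a) $\Leftrightarrow$ (b), I would apply Serre duality to the four defining vanishings of $F := E^*(K_X+3h)$: each group $h^i(F(-jh)) = h^i\bigl(E^*(K_X+(3-j)h)\bigr)$ dualizes to $h^{2-i}(E(-(3-j)h))$, and running over the pairs $(i,j) = (0,1),(1,1),(1,2),(2,2)$ one checks that the four resulting groups are precisely those defining the Ulrichness of $E$. For (a) $\Leftrightarrow$ (d), the forward direction is immediate, since an Ulrich bundle has all cohomology vanishing in the twists $-h,-2h$, giving $h^0(E(-h)) = 0$, $h^0(E^*(2h+K_X)) = h^2(E(-2h)) = 0$, and, via Riemann--Roch, \ref{e1}; conversely, given \ref{e1} together with the two $H^0$-vanishings, the section argument of the first step supplies $h^0(E(-2h)) = h^2(E(-h)) = 0$, and then $\chi(E(-h)) = \chi(E(-2h)) = 0$ forces the remaining two $h^1$'s to vanish, so $E$ is Ulrich. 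For (c) $\Leftrightarrow$ (a), the ACM hypothesis directly yields $h^1(E(-h)) = h^1(E(-2h)) = 0$, whence $\chi(E(-h)) = \chi(E(-2h)) = 0$ collapses the surviving $h^0$ and $h^2$ terms to zero.

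The single step that is not pure bookkeeping is the forward implication (a) $\Rightarrow$ (c), that an Ulrich bundle is ACM: the defining conditions only control the twists by $-h$ and $-2h$, whereas ACM demands $H^1(E(t)) = 0$ for every $t$. I expect this to be the main obstacle, and I would resolve it by the standard device of a finite linear projection $\pi \colon X \to \mathbb{P}^2$ with $\pi^*\mathcal{O}_{\mathbb{P}^2}(1) = h$: Ulrichness of $E$ is equivalent to $\pi_*E \cong \mathcal{O}_{\mathbb{P}^2}^{\oplus N}$, and then $H^1(E(t)) = H^1\bigl(\mathbb{P}^2, \mathcal{O}_{\mathbb{P}^2}(t)\bigr)^{\oplus N} = 0$ for all $t$, giving ACM. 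With that one citation in place, the cycle (a) $\Rightarrow$ (c) $\Rightarrow$ (a), together with the separately established (a) $\Leftrightarrow$ (b) and (a) $\Leftrightarrow$ (d), completes the proof.
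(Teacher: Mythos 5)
Your proposal is correct, but it cannot be compared step-by-step with the paper, because the paper offers no proof at all: it simply cites \cite[Proposition 2.1]{GC1}. What you have written is essentially the standard argument behind that citation, and it holds up under checking: the reduction of Ulrichness on a surface to the symmetric condition $H^{i}(E(-h))=H^{i}(E(-2h))=0$ for all $i$ (via multiplication by a section of $h$ plus Serre duality) is correct; your Riemann--Roch coefficients $a=\tfrac{r}{2}h^2$, $b=c_1(E)\cdot h-\tfrac{r}{2}K_X\cdot h$, $c=r\chi(\mathcal{O}_X)+\tfrac12\bigl(c_1(E)^2-c_1(E)\cdot K_X\bigr)-c_2(E)$ are right, and the translation of \ref{e1} into $\chi(E(-h))=\chi(E(-2h))=0$ (i.e.\ $b=3a$, $c=2a$) checks out; the Serre-duality bookkeeping for (a)$\Leftrightarrow$(b) and the two easy directions of (a)$\Leftrightarrow$(d) and (c)$\Rightarrow$(a) are all sound. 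You also correctly isolate the only non-formal step, namely (a)$\Rightarrow$(c) (Ulrich implies ACM), and your resolution via a finite linear projection $\pi:X\to\mathbb{P}^2$ with $\pi_*E\cong\mathcal{O}_{\mathbb{P}^2}^{\oplus N}$ is the classical Eisenbud--Schreyer device; note that this characterization is itself a theorem (its proof on $\mathbb{P}^2$ rests on a Horrocks/Beilinson-type splitting criterion, fed by exactly the six vanishings you established in your first step), so strictly speaking your proof trades the paper's single citation to Casnati for a citation to Eisenbud--Schreyer at this one point --- a fair trade, since everything else in your write-up is self-contained, whereas the paper externalizes the entire proposition.
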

\begin{proof}
See  \cite[Proposition 2.1]{GC1}.
\end{proof}
\begin{corollary}\label{c1}
  Let $X$ be a surface endowed with a very ample line bundle $\mathcal{O}_X(1)$.
If $E$ is a vector bundle of rank $2$ on $X$, then the following assertions are equivalent:\\
(a) $E$ is a special Ulrich bundle with respect to $\mathcal{O}_X(1)$;\\
(b) $E$ is initialized and 
\[
 c_1(E) = 3h+K_X, c_2(E)= \frac{5h^2 + 3K_Xh}{2} +2 \chi(\mathcal{O}_X).
\]

\end{corollary}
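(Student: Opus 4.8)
The plan is to deduce the corollary directly from Proposition~\ref{p1} by specialising to $\mathrm{rank}(E)=2$, where the canonical isomorphism $E^{*}\cong E\otimes(\det E)^{-1}=E(-c_1(E))$ forces the two a priori independent hypotheses of part (d) to coincide. Recall that in rank two a \emph{special} Ulrich bundle is, by definition, an Ulrich bundle with $c_1(E)=3h+K_X$ (equivalently $E\cong E^{*}(K_X+3h)$, the Ulrich dual furnished by Proposition~\ref{p1}(b)); thus the whole content of the statement is to translate the cohomological Ulrich condition into the numerical data listed in (b).

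For the implication (a)$\Rightarrow$(b) I would substitute $c_1(E)=3h+K_X$ and $\mathrm{rank}(E)=2$ into the second equality of \eqref{e1}. A direct computation gives $c_1(E)^2-c_1(E)K_X=9h^2+3hK_X$, whence $c_2(E)=\tfrac{9h^2+3hK_X}{2}-2(h^2-\chi(\mathcal{O}_X))=\tfrac{5h^2+3hK_X}{2}+2\chi(\mathcal{O}_X)$, exactly the asserted value. That $E$ is initialized is then immediate: the Ulrich vanishing already yields $h^0(E(-h))=0$, while $h^0(E)\neq0$ because a nonzero Ulrich bundle is globally generated (indeed $h^0(E)=\mathrm{rank}(E)\,h^2>0$).

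For the converse (b)$\Rightarrow$(a) I would check the hypotheses of Proposition~\ref{p1}(d). The first equality of \eqref{e1} reads $c_1(E)h=3h^2+hK_X$, which holds since $c_1(E)=3h+K_X$, and the second is the assumed value of $c_2(E)$ upon reversing the arithmetic above; so \eqref{e1} holds. For the vanishings, the key observation is that for a rank two bundle $E^{*}\cong E\otimes(\det E)^{-1}=E(-3h-K_X)$, so that $E^{*}(2h+K_X)\cong E(-h)$. Hence $h^0(E^{*}(2h+K_X))=h^0(E(-h))$, and both vanish precisely because $E$ is initialized. Proposition~\ref{p1}(d) then shows $E$ is Ulrich, and $c_1(E)=3h+K_X$ makes it special.

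The only genuinely delicate point is the identification $E^{*}(2h+K_X)\cong E(-h)$: it is what collapses the two separate vanishing conditions of part (d) into one and is peculiar to rank two. Everything else is mechanical substitution into \eqref{e1}, so beyond careful bookkeeping of the Chern-class arithmetic I do not expect a real obstacle.
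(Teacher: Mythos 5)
Your proof is correct. Note that the paper itself gives no argument for this corollary --- it simply cites \cite[Corollary 2.2]{GC1} --- and your derivation is exactly the standard one underlying that citation: specialize Proposition~\ref{p1} to rank two, where $E^{*}\cong E\otimes(\det E)^{-1}$ turns the two vanishing conditions of part (d) into the single condition $h^0(E(-h))=0$, and the Chern-class identities of \eqref{e1} become the stated formula for $c_2(E)$. Your identification $E^{*}(2h+K_X)\cong E(-h)$ is precisely the key point, and the arithmetic checks out ($c_1^2-c_1K_X=9h^2+3hK_X$, hence $c_2=\tfrac{5h^2+3hK_X}{2}+2\chi(\mathcal{O}_X)$). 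The only step imported from outside the paper's stated preliminaries is the fact that an Ulrich bundle has $h^0(E)=\mathrm{rank}(E)\cdot h^2\neq 0$ (needed for ``initialized'' in (a)$\Rightarrow$(b)); this is standard (e.g.\ via the trivial pushforward under a finite linear projection), and could alternatively be obtained from ACM-ness plus Riemann--Roch, so it is not a gap. One small caveat: your parenthetical claim that speciality is \emph{equivalent} to $E\cong E^{*}(K_X+3h)$ is only true up to $2$-torsion in $\mathrm{Pic}(X)$, but nothing in your argument uses that equivalence, so it is harmless.
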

\begin{proof}
 See \cite[Corollary 2.2]{GC1}.
\end{proof}

\begin{remark}
 Under the hypothesis of the Theorem \ref{T1} if Ulrich bundle $E$ exsists then it is stable. 
 If $E$ is not stable then there exists a line subbundle $L$ with $c_1(L)=kh$, $k\geq 2$. 
 Thus $\mathcal{O}_{_X}(m)$, $m\geq 1$, is a subbundle of $E(-1)$. Therefore, $H^0(E(-1))\neq 0$ and hence $E$ is not Ulrich.
\end{remark}

\begin{definition}
A smooth variety $X \subset \mathbb{P}^n$  is Ulrich–wild
if it supports families  of pairwise non–isomorphic, indecomposable,
Ulrich bundles of  arbitrary dimension.
\end{definition}
\begin{theorem}\label{T3}
 Let $X$ be a smooth variety endowed with a very ample line bundle $\mathcal{O}_X(1)$.
If $A$ and $B$ are simple Ulrich bundles on $X$ such that $h^1(A \otimes B^*) \ge 3$,
and every non–zero morphism $A \to B$  is an isomorphism, then $X$ is Ulrich–wild.
\end{theorem}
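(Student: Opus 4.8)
The plan is to construct a functor from the category of finite–dimensional representations of a wild quiver into the category of Ulrich bundles on $X$ which preserves indecomposability and reflects isomorphism, so that the wildness of the source category is transported to $X$. Set $n := h^1(A \otimes B^*)$. Since $A$ and $B$ are locally free, $H^1(A \otimes B^*) = H^1(\mathcal{H}om(B,A)) = \mathrm{Ext}^1(B,A)$, so $\dim_{\mathbb{C}} \mathrm{Ext}^1(B,A) = n \ge 3$; fix a three–dimensional subspace with basis $e_1, e_2, e_3$. I will work with the $3$–Kronecker quiver $K_3$ (two vertices, three arrows), whose path algebra is of wild representation type.

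First I would record that Ulrich bundles are closed under extensions: if $0 \to F' \to F \to F'' \to 0$ is exact with $F'$ and $F''$ Ulrich, then $F$ is Ulrich. This is immediate from the cohomological definition, since each vanishing defining the Ulrich condition is inherited by $F$ from $F'$ and $F''$ through the long exact sequence in cohomology; equivalently one may verify the conditions of Proposition \ref{p1}(d), the Chern–class identities being additive and the vanishings $h^0(F(-h)) = h^0(F^*(2h+K_X)) = 0$ propagating through the sequence. Next, to a representation $M = (V_1, V_2; f_1, f_2, f_3)$ of $K_3$, with $f_i \in \mathrm{Hom}_{\mathbb{C}}(V_2, V_1)$, I attach the class $\xi_M = \sum_{i=1}^{3} e_i \otimes f_i$ in $\mathrm{Ext}^1(B,A) \otimes \mathrm{Hom}(V_2, V_1) = \mathrm{Ext}^1(B \otimes V_2,\, A \otimes V_1)$ and let $E_M$ be the middle term of the corresponding extension
\[
0 \longrightarrow A \otimes V_1 \longrightarrow E_M \longrightarrow B \otimes V_2 \longrightarrow 0 .
\]
By the previous remark $E_M$ is Ulrich, and functoriality of $\mathrm{Ext}^1$ in both variables turns a morphism $M \to M'$ into a morphism of the two defining sequences, hence a morphism $E_M \to E_{M'}$; this defines a functor $\Phi$ from $\mathrm{rep}(K_3)$ to Ulrich bundles.

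The core of the argument, and the step I expect to be the main obstacle, is to prove that $\Phi$ is a representation embedding. Given a morphism $E_M \to E_{M'}$, composing the inclusion $A \otimes V_1 \hookrightarrow E_M$ with the projection $E_{M'} \twoheadrightarrow B \otimes V_2'$ lands in $\mathrm{Hom}(A,B) \otimes \mathrm{Hom}(V_1, V_2')$, which vanishes because $\mathrm{Hom}(A,B) = 0$; this is exactly where the hypothesis that every nonzero morphism $A \to B$ is an isomorphism is used (when $A \ncong B$, a nonzero such map would be an isomorphism). Hence every morphism respects the two–step filtration and, using $\mathrm{Hom}(A,A) = \mathrm{Hom}(B,B) = \mathbb{C}$ from simplicity, induces a pair $(g_1, g_2)$ with $g_i \in \mathrm{Hom}(V_i, V_i')$. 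Compatibility with the classes $\xi_M, \xi_{M'}$ then forces $(g_1, g_2)$ to intertwine the three arrows, i.e. to lie in $\mathrm{Hom}_{K_3}(M, M')$, while the ambiguity in a lift of a fixed $(g_1, g_2)$ is a torsor under $\mathrm{Hom}(B,A) \otimes \mathrm{Hom}(V_2, V_1')$, whose elements are nilpotent endomorphisms when $M = M'$ (they factor through $E_M \twoheadrightarrow B\otimes V_2 \to A \otimes V_1' \hookrightarrow E_{M'}$ and hence square to zero). Thus $\mathrm{End}(E_M)$ is an extension of $\mathrm{End}_{K_3}(M)$ by a nil ideal, so it is local precisely when $\mathrm{End}_{K_3}(M)$ is; consequently $\Phi$ sends indecomposables to indecomposables and satisfies $E_M \cong E_{M'} \Rightarrow M \cong M'$. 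Carrying out these diagram chases carefully — above all pinning down the extension–class compatibility — is the delicate part.

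Finally, because $K_3$ is of wild representation type, for every $d$ there exist families of dimension at least $d$ of pairwise non–isomorphic indecomposable representations; their images under $\Phi$ furnish families of pairwise non–isomorphic, indecomposable Ulrich bundles of arbitrarily large dimension, whence $X$ is Ulrich wild. In the remaining case $A \cong B$ the same scheme applies with the one–vertex quiver carrying $n \ge 3$ loops, whose path algebra is the free associative algebra on $n$ generators and is again wild, using self–extensions in $\mathrm{Ext}^1(A,A)$ together with $\mathrm{End}(A) = \mathbb{C}$.
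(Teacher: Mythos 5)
The paper itself does not prove this statement --- it is quoted from Faenzi--Pons-Llopis \cite{FP} --- so your attempt is effectively a reconstruction of the cited argument. For the case $A \not\cong B$ your reconstruction is correct, and it is essentially the argument of \cite{FP}: closure of Ulrich bundles under extensions, the assignment $M \mapsto \xi_M = \sum_i e_i \otimes f_i$, the observation that $\mathrm{Hom}(A,B)=0$ forces every morphism $E_M \to E_{M'}$ to respect the two-step filtration, the identification of $\mathrm{End}(E_M)$ as an extension of $\mathrm{End}_{K_3}(M)$ by the square-zero ideal $\mathrm{Hom}(B,A)\otimes\mathrm{Hom}(V_2,V_1)$, and the transport of wildness from the $3$-Kronecker quiver. (Note that your argument nowhere needs $\mathrm{Hom}(B,A)=0$: a nonzero $\mathrm{Hom}(B,A)$ merely enlarges the nil ideal, so locality of $\mathrm{End}(E_M)$ is still equivalent to locality of $\mathrm{End}_{K_3}(M)$.) This is also the only case the paper actually invokes: in the proof of Theorem \ref{T2}, $A$ and $B$ are non-isomorphic stable Ulrich bundles, so $\mathrm{Hom}(A,B)=0$.

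The genuine gap is your last sentence. The case $A \cong B$ is not vacuous --- if $A$ is simple, every nonzero endomorphism is a nonzero scalar, hence an isomorphism, so the theorem's hypotheses allow $B=A$ whenever $h^1(A\otimes A^*)\ge 3$ --- and for it ``the same scheme'' does not apply. The entire engine of your proof is the vanishing of $\mathrm{Hom}(A,B)\otimes\mathrm{Hom}(V_1,V_2')$, which is what forces filtration preservation; when $B=A$ this space equals $\mathrm{Hom}(V_1,V_2')\neq 0$, morphisms between the $E_M$ need not preserve the filtration, no induced pair $(g_1,g_2)$ exists, and the description of $\mathrm{End}(E_M)$ collapses. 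In fact the naive functor for the $n$-loop quiver fails outright to preserve indecomposability: the one-dimensional representation $M=(\mathbb{C};0,\dots,0)$ is indecomposable, yet $\xi_M=\sum_i e_i\otimes 0=0$, so $E_M\cong A\oplus A$ is decomposable. Repairing this case requires a genuinely different construction --- for instance taking classes of the form $e_1\otimes \mathrm{id}_V + e_2\otimes f_1 + e_3\otimes f_2$ for representations $(V;f_1,f_2)$ of the $2$-loop quiver, followed by an endomorphism analysis that cannot simply quote filtration preservation. As written, your proof establishes the theorem only under the additional assumption $A\not\cong B$; that suffices for this paper's application, but not for the statement as given.
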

\begin{proof}
 See \cite{FP}[Theorem A, Corollary 2.1 and Remark 1.6 (iii)].
 \end{proof}
 \subsection{Cayley-Bacharach Property}

  We recall that a locally complete intersection subscheme $Z$
of dimension zero on a surface $X$ is said to have Cayley–Bacharach property (CB for short) with respect to
a line bundle $L$ if, for each subscheme  $Z^{'} \subset Z$  of co-length $1$, 
 the natural morphism $H^0(I_{Z,X}(L)) \to H^0(I_{{Z,X}^{'}}(L))$
is an isomorphism.
\begin{theorem}\label{T4}
Let $C$ be a smooth curve and $f : C \to \mathbb{P}^r$  a morphism. Then any reduced
divisor $Z \in  |f^*(\mathcal{O}_{\mathbb{P}^r}(1))|$
 satisfies the Cayley-Bacharach condition with respect to $K_C$, where $K_C$ denotes the canonical bundle of $C$.
\end{theorem}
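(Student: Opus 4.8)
The plan is to unwind the Cayley--Bacharach condition into a concrete statement about holomorphic differentials and then to prove it by a residue (reciprocity) argument that uses only the fact that $f$ is a morphism, i.e.\ that the linear system defining it is base point free. Set $L := f^{*}\mathcal{O}_{\mathbb{P}^r}(1)$, so that $Z \in |L|$ is reduced. For a point $p \in Z$ the subscheme $Z' = Z - p$ of colength one gives $I_{Z',C}(K_C) = K_C(-Z+p)$, and the map in the definition becomes the inclusion $H^0(K_C(-Z)) \hookrightarrow H^0(K_C(-Z+p))$. This inclusion is automatically injective, so the entire content of the Cayley--Bacharach property is its \emph{surjectivity}: every global differential $\omega$ vanishing along $Z \setminus \{p\}$ must in fact vanish at $p$ as well. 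Thus it suffices to fix $p \in Z$ and $\omega \in H^0(K_C(-Z+p))$ and to show $\omega(p)=0$.

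The key step is a residue computation. Write $Z = \operatorname{div}(s)$ for a section $s \in H^0(L)$; since $Z$ is reduced, $s$ has only simple zeros, exactly along $Z$. Because $f$ is a morphism it is given by sections $f_0,\dots,f_r \in H^0(L)$ with no common zero. For each $i$ I would consider the meromorphic $1$-form
\[
\eta_i \;=\; \frac{f_i}{s}\,\omega,
\]
whose polar set is contained in $Z$, the rational function $f_i/s$ having divisor $\operatorname{div}(f_i) - Z$. At a point $q \in Z \setminus \{p\}$ the simple pole of $f_i/s$ is cancelled by the zero of $\omega$, so $\eta_i$ is holomorphic there; at $p$ the form $\eta_i$ has at worst a simple pole; and it is holomorphic away from $Z$ since $\omega$ is everywhere holomorphic. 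Hence $\eta_i$ is a meromorphic $1$-form on the compact Riemann surface $C$ with at most a single (simple) pole, at $p$. By the residue theorem $\sum_{x} \operatorname{Res}_{x}\,\eta_i = 0$, so $\operatorname{Res}_{p}\,\eta_i = 0$; a simple pole with vanishing residue is no pole, so $\eta_i$ is holomorphic at $p$. Reading this in a local coordinate $t$ at $p$, with $s = t\,u$ and $u(p)\neq 0$, yields $f_i(p)\,\omega(p)=0$ for every $i$.

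Finally I would invoke base point freeness: since $f$ is a morphism the $f_i$ have no common zero, so $f_{i_0}(p)\neq 0$ for some $i_0$, and the relation $f_{i_0}(p)\,\omega(p)=0$ forces $\omega(p)=0$, which is exactly what was required. I expect the only delicate point to be the bookkeeping of the orders of poles and zeros of $\eta_i$ along $Z$, and this is precisely where the reducedness of $Z$ is used. As an alternative to the residue argument one can dualize: via Serre duality the surjectivity of $H^0(K_C(-Z)) \to H^0(K_C(-Z+p))$ is equivalent to $p$ not being a base point of $|L| = |\mathcal{O}_C(Z)|$, and base point freeness of the complete system is immediate from that of the subsystem cut out by $f$.
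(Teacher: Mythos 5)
Your proposal is correct, but there is nothing in the paper to compare it against step by step: the paper's entire proof of this theorem is the citation ``See \cite[Theorem 2.3]{HLU}'', so any self-contained argument is necessarily a different route. Your main (residue) argument is sound: for reduced $Z$ the colength-one subschemes are exactly the $Z-p$ with $p\in Z$, so Cayley--Bacharach reduces to showing that every $\omega \in H^0(K_C(-Z+p))$ also vanishes at $p$; the forms $\eta_i=(f_i/s)\,\omega$ have at worst a simple pole at $p$ and no other poles (here reducedness of $Z$ is used essentially, since the simple zeros of $\omega$ along $Z-p$ must cancel the simple poles of $f_i/s$), so the residue theorem forces each $\eta_i$ to be holomorphic at $p$, giving $f_i(p)\,\omega(p)=0$, and the absence of common zeros of the $f_i$ (i.e.\ that $f$ is a morphism) finishes the proof. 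The alternative you sketch in your last sentence is the cleaner, standard route: by Riemann--Roch the inclusion $H^0(K_C(-Z))\subset H^0(K_C(-Z+p))$ is an equality if and only if $h^0(\mathcal{O}_C(Z)(-p))=h^0(\mathcal{O}_C(Z))-1$, i.e.\ if and only if $p$ is not a base point of $|\mathcal{O}_C(Z)|$, and this complete system is base point free because it contains the base point free subsystem $f^*|\mathcal{O}_{\mathbb{P}^r}(1)|$. That version buys brevity and, notably, never uses reducedness of $Z$; your residue computation buys a concrete, coordinate-level picture of exactly where the hypothesis on $f$ enters. Either one is a complete substitute for the paper's external citation.
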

\begin{proof}
See \cite[Theorem 2.3]{HLU}.
\end{proof}
\begin{corollary}\label{C1}
Let $X$ be a smooth projective surface with $\text{Pic}(X) =\mathbb{Z} \text{ and } C \in |\mathcal{O}_X(m)|$ be a smooth curve. 
Let $f : C \to \mathbb{P}^r$  be a morphism, and let $\Gamma \in |f^*(\mathcal{O}_{\mathbb{P}^r}(1))|$   
be general (and thus reduced). 
Then $\Gamma$ satisfies CB with respect to $K_X \otimes \mathcal{O}_X(m)$.
\end{corollary}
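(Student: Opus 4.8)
The goal is to deduce Corollary \ref{C1} from Theorem \ref{T4} by relating the Cayley--Bacharach property of $\Gamma$ as a divisor \emph{on the curve} $C$ to its Cayley--Bacharach property as a zero-dimensional subscheme \emph{of the surface} $X$. The plan is to first apply Theorem \ref{T4} directly to the morphism $f : C \to \mathbb{P}^r$: since $\Gamma$ is general in $|f^*(\mathcal{O}_{\mathbb{P}^r}(1))|$ it is reduced, and Theorem \ref{T4} tells us that $\Gamma$ satisfies the Cayley--Bacharach condition with respect to the canonical bundle $K_C$ of the curve $C$. By adjunction, since $C \in |\mathcal{O}_X(m)|$, we have $K_C = (K_X \otimes \mathcal{O}_X(m))|_C$. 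Because $\mathrm{Pic}(X) = \mathbb{Z}$ is generated by $\mathcal{O}_X(1) = K_X$ up to the normalization in the statement, the restriction $K_X \otimes \mathcal{O}_X(m) = \mathcal{O}_X(m+1)$ is a genuine line bundle on $X$ whose restriction to $C$ computes $K_C$; write $L := K_X \otimes \mathcal{O}_X(m)$.

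The substantive step is to transfer the CB property from $C$ to $X$. Here I would use the short exact sequence on $X$ twisting the ideal-sheaf sequence of $C$ by $L$,
\begin{equation}\label{cbseq}
0 \to \mathcal{O}_X(L - C) \to \mathcal{O}_X(L) \to \mathcal{O}_C(L|_C) \to 0,
\end{equation}
and the analogous sequences for each colength-one subscheme $\Gamma' \subset \Gamma$. Since $\Gamma$ is supported on $C$ and is a Cartier divisor on $C$, the sections of $I_{\Gamma,X}(L)$ vanishing on $\Gamma$ should be compared with sections of $I_{\Gamma,C}(L|_C) = \mathcal{O}_C(K_C - \Gamma) = \mathcal{O}_C(L|_C - \Gamma)$ on the curve. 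The key point is that $L - C = K_X$, and I would want $H^0(\mathcal{O}_X(L-C)) = H^0(K_X)$ together with the surjectivity of the restriction $H^0(\mathcal{O}_X(L)) \to H^0(\mathcal{O}_C(L|_C))$ to guarantee that the obstruction to lifting a section from $C$ to $X$ does not distinguish $\Gamma$ from $\Gamma'$; since $\Gamma$ and $\Gamma'$ differ only by a single reduced point lying on $C$, the contribution from the ``off-curve'' directions is identical for both, so the CB isomorphism on $C$ promotes to a CB isomorphism on $X$.

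Concretely, I would show that for any colength-one $\Gamma' \subset \Gamma$ the commutative diagram relating $H^0(I_{\Gamma,X}(L)) \to H^0(I_{\Gamma',X}(L))$ to $H^0(I_{\Gamma,C}(L|_C)) \to H^0(I_{\Gamma',C}(L|_C))$ has vertical maps that are compatible, and that the latter horizontal map is an isomorphism by Theorem \ref{T4} (after identifying $I_{\Gamma,C}(L|_C)$ with $\mathcal{O}_C(K_C-\Gamma)$). One must check that a section of $I_{\Gamma',X}(L)$ which already vanishes along all of $\Gamma$ when restricted to $C$ in fact vanishes along $\Gamma$ on $X$; this uses that $\Gamma \subset C$ scheme-theoretically so that the ideal $I_{\Gamma,X}$ is detected by its restriction to $C$ away from the common part $I_{C,X}(L)$, whose sections are the same for $\Gamma$ and $\Gamma'$.

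The main obstacle I expect is precisely this bookkeeping of ideal sheaves: ensuring that ``vanishing on $\Gamma$ as a subscheme of $X$'' and ``vanishing on $\Gamma$ as a divisor on $C$'' agree at the level of $H^0$, i.e.\ controlling the difference between $H^0(I_{\Gamma,X}(L))$ and $H^0(I_{C,X}(L)) \oplus H^0(I_{\Gamma,C}(L|_C))$ via the long exact sequence of \eqref{cbseq}. The hypotheses $\mathrm{Pic}(X)=\mathbb{Z}$ and the adjunction identification $K_C = L|_C$ are exactly what make the cohomological comparison clean, and generality of $\Gamma$ (hence reducedness and the avoidance of base loci) removes the degenerate cases where a point of $\Gamma$ might be a base point of $|L|$ on $X$.
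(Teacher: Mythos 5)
Your route is the paper's route: the paper's entire proof of Corollary \ref{C1} is the one sentence ``follows from Theorem \ref{T4} and the adjunction formula,'' and the transfer from the curve to the surface that you spell out via the sequence $0 \to I_{C,X}(L) \to I_{\Gamma,X}(L) \to I_{\Gamma,C}(L|_C) \to 0$, with $L := K_X \otimes \mathcal{O}_X(m)$, is exactly the content the paper leaves implicit. Your third paragraph is the correct and complete version of that transfer: injectivity of $H^0(I_{\Gamma,X}(L)) \to H^0(I_{\Gamma',X}(L))$ is automatic, and for surjectivity one only needs that a section $s$ of $L$ vanishing on $\Gamma'$ restricts to a section of $K_C$ (adjunction) vanishing at the reduced points of $\Gamma'$; if $s|_C = 0$ then $s$ vanishes on $C \supset \Gamma$, and otherwise Theorem \ref{T4} forces $s|_C$ to vanish on all of $\Gamma$; in either case $s \in H^0(I_{\Gamma,X}(L))$, because $I_{\Gamma,X}/I_{C,X} \cong I_{\Gamma,C}$, as you observe.

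One step you flag as needed would in fact fail, so excise it: the surjectivity of the restriction $H^0(\mathcal{O}_X(L)) \to H^0(\mathcal{O}_C(L|_C))$. From $0 \to K_X \to L \to K_C \to 0$ and Kodaira vanishing ($H^1(L) = H^1(K_X \otimes \mathcal{O}_X(m)) = 0$, since $\mathcal{O}_X(m)$ is ample, being a nonzero effective class on a surface of Picard rank one), the cokernel of this restriction map is exactly $H^1(K_X) \cong H^1(\mathcal{O}_X)^{*}$, which is nonzero whenever $q(X) > 0$; the paper's standing hypothesis $q < \chi - 1$ explicitly allows such surfaces. Fortunately the Cayley--Bacharach transfer never requires lifting sections from $C$ to $X$, only restricting them, which is the direction your concrete argument actually uses. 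If you prefer the diagram chase: given $s \in H^0(I_{\Gamma',X}(L))$, its restriction lies in $H^0(K_C(-\Gamma')) = H^0(K_C(-\Gamma))$ by curve CB, and the obstruction to lifting that restriction to $H^0(I_{\Gamma,X}(L))$ lives in $H^1(I_{C,X}(L)) = H^1(K_X)$ and vanishes because the connecting homomorphisms for $\Gamma$ and $\Gamma'$ are compatible and $s$ is itself a lift on the $\Gamma'$ side --- no global surjectivity enters. With that requirement deleted (and the remark about base points of $|L|$, which is equally irrelevant to CB, dropped), your proof is correct and is the intended one.
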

\begin{proof}
The corollary follows from the Theorem \ref{T4} and the  adjunction formula. 
\end{proof}
\begin{theorem}\label{T5}
Let $X$ be a surface and $Z \subset X$  a locally complete intersection subscheme of dimension $0$.
Then there exists a vector bundle $E$ of rank $2$ on $X$ fitting into an exact sequence
of the form
\[
0 \to \mathcal{O}_X \to E \to I_{Z,X}(L) \to 0
\]
 if and only if $Z$ has CB with respect to $L \otimes K_X$.
\end{theorem}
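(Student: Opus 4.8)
The plan is to realize the extensions through $\mathrm{Ext}^1$ and to detect local freeness of the middle term by means of the local-to-global $\mathcal{E}xt$ spectral sequence, translating the resulting local generation condition into the Cayley--Bacharach property via Serre duality.

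First I would recall that isomorphism classes of extensions
\[
0 \to \mathcal{O}_X \to E \to I_{Z,X}(L) \to 0
\]
are classified by $\mathrm{Ext}^1(I_{Z,X}(L),\mathcal{O}_X)$, and that any such $E$ is automatically a torsion-free rank-two sheaf with $c_1(E)=L$; the only point at issue is whether $E$ is locally free. To analyze this I would use the local-to-global spectral sequence $H^p(\mathcal{E}xt^q(I_{Z,X}(L),\mathcal{O}_X)) \Rightarrow \mathrm{Ext}^{p+q}(I_{Z,X}(L),\mathcal{O}_X)$. Applying $\mathcal{H}om(-,\mathcal{O}_X)$ to the structure sequence $0 \to I_{Z,X} \to \mathcal{O}_X \to \mathcal{O}_Z \to 0$ and using the vanishing of $\mathcal{H}om(\mathcal{O}_Z,\mathcal{O}_X)$ and $\mathcal{E}xt^1(\mathcal{O}_Z,\mathcal{O}_X)$ (because $Z$ has codimension two), one gets $\mathcal{H}om(I_{Z,X}(L),\mathcal{O}_X)\cong \mathcal{O}_X(-L)$ and $\mathcal{E}xt^1(I_{Z,X}(L),\mathcal{O}_X)\cong \mathcal{E}xt^2(\mathcal{O}_Z,\mathcal{O}_X)\otimes L^{-1}$. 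Since $Z$ is a local complete intersection of dimension zero, local duality identifies the latter with the invertible $\mathcal{O}_Z$-module $\omega_Z\otimes(K_X\otimes L)^{-1}|_Z$. The edge sequence then reads
\[
0 \to H^1(L^{-1}) \to \mathrm{Ext}^1(I_{Z,X}(L),\mathcal{O}_X) \xrightarrow{\ \alpha\ } H^0\big(\omega_Z\otimes(K_X\otimes L)^{-1}\big) \xrightarrow{\ \partial\ } H^2(L^{-1}).
\]

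Next I would invoke the local criterion of the Serre construction: the sheaf $E$ associated with a class $e$ is locally free if and only if the section $\alpha(e)$ generates the invertible $\mathcal{O}_Z$-module $\mathcal{E}xt^1(I_{Z,X}(L),\mathcal{O}_X)$ at every point of $Z$, that is, if and only if $\alpha(e)$ is nowhere vanishing on $Z$. Hence a locally free $E$ exists if and only if some nowhere-vanishing global section of $\omega_Z\otimes(K_X\otimes L)^{-1}$ lies in the image of $\alpha$, equivalently in $\ker\partial$. Working over $\mathbb{C}$, a generic element of $\ker\partial$ is nowhere vanishing precisely when, for each point $z\in Z$, evaluation at $z$ is nonzero on $\ker\partial$; I would phrase this uniformly in terms of colength-one subschemes $Z'\subset Z$ so as to accommodate non-reduced $Z$.

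The final and most delicate step is to match $\partial$ with the maps occurring in the definition of the Cayley--Bacharach property. By Serre duality on $X$ one has $H^2(L^{-1})\cong H^0(K_X\otimes L)^{*}$, and I would show that under this identification $\partial$ is, up to duality, the transpose of the restriction map $H^0(K_X\otimes L) \to H^0\big((K_X\otimes L)|_{Z}\big)$ and its corestrictions to the colength-one pieces. Tracing these identifications, the requirement that for every colength-one $Z'\subset Z$ there be a class in $\ker\partial$ not vanishing on the residual length-one quotient becomes exactly the statement that $H^0(I_{Z,X}(K_X\otimes L)) \to H^0(I_{Z',X}(K_X\otimes L))$ is an isomorphism, i.e. the Cayley--Bacharach property with respect to $K_X\otimes L$. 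This yields both implications simultaneously: CB guarantees a nowhere-vanishing section in $\ker\partial$ and hence a locally free $E$, while conversely a locally free $E$ forces $\alpha(e)$ to be nowhere vanishing and to lie in $\ker\partial$, which back-translates to CB. The main obstacle I anticipate is precisely this dualization: one must carefully track the coboundary $\partial$ and its behaviour on the local summands of $\mathcal{E}xt^1$ supported at the points of $Z$, especially when $Z$ is non-reduced, so that the ``generation at every point'' condition is faithfully converted into the colength-one isomorphism statement of the Cayley--Bacharach condition.
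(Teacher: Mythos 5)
Your proposal is correct, and it takes essentially the same route as the paper: the paper offers no argument of its own but cites Huybrechts--Lehn, Theorem 5.1.1, and your outline (classifying extensions by $\mathrm{Ext}^1(I_{Z,X}(L),\mathcal{O}_X)$, using the local-to-global spectral sequence and local duality for the l.c.i.\ scheme $Z$, characterizing local freeness of $E$ as generation of the sheaf $\mathcal{E}xt^1$ along $Z$, and then dualizing the coboundary into the restriction maps that define the Cayley--Bacharach condition) is precisely the proof given in that reference. The dualization step you flag as delicate is indeed where the real work lies, and it is carried out in the cited source exactly along the lines you describe.
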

\begin{proof}
 See \cite[Theorem 5.1.1]{HL}.
 \end{proof}
 \subsection{Uniform position theorem}
 Let $C \subset \mathbb{P}^r, r \ge 3$ be an irreducible, non-degenerate curve of degree $d$.  Let $U \subset (\mathbb{P}^r)^*$ be the open subset of hyperplanes transverse to $C$. Let $\mathcal{D}$ be a linear system on $C$. Let $m$ be the maximal number of independent conditions imposed by a hyperplane section $C\cap H, H \in U$ on $\mathcal{D}$. \\
 Let $U^{'}:= \{H \in U: C \cap H \text{ impose at most } m-1 \text{ independent conditions on } \mathcal{D}\}$. Then $U^{'}$ is a proper subvariety of $U$. An element in the complement of $U^{'}$ in $U$ will be called a general hyperplane with respect to $\mathcal{D}$.
 \begin{theorem}[Uniform Position Theorem]
  Let $C \subset \mathbb{P}^r, r \ge 3$ be an irreducible, non-degenerate, possible singular curve of degree $d$. If $\mathcal{D}$ is any linear system on $C$, and $\Gamma= H \cap C$, a hyperplane section general with respect to $\mathcal{D}$, then all subsets of $k$ points of  $\Gamma$ impose the same number of conditions on $\mathcal{D}$.
  \end{theorem}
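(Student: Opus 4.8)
The plan is to prove the statement by a monodromy argument in the style of Harris, whose crux is the identification of the monodromy group of a general hyperplane section with the full symmetric group; granting that, the uniform position property is formal. Concretely, I would form the incidence variety
\[
\Phi = \{(p,H) : p \in C,\ p \in H\} \subset C \times (\mathbb{P}^r)^*,
\]
and restrict the second projection to the open set $U$ of transverse hyperplanes, giving an unramified covering $\pi : \Phi_U \to U$ of degree $d$. Fixing a base point $H_0 \in U$ produces the monodromy representation $\rho : \pi_1(U,H_0) \to \mathrm{Sym}(\pi^{-1}(H_0)) \cong S_d$, and I write $G = \mathrm{im}\,\rho$ for its image. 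The key assertion, established below, is that $G = S_d$.

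\emph{Reduction to $G=S_d$.} For a finite reduced $S \subset C$ let $c(S) = \dim \mathcal{D} - \dim \mathcal{D}(-S)$ be the number of conditions $S$ imposes on $\mathcal{D}$; this is the rank of the evaluation map out of $\mathcal{D}$, hence lower–semicontinuous in families. For fixed $k$ consider the degree-$\binom{d}{k}$ covering
\[
\Phi_k = \{(H,S) : S \subset C \cap H,\ |S| = k\} \longrightarrow U,
\]
whose fibre over $H$ is the set of $k$-subsets of $C \cap H$, so that its connected components correspond to the $G$-orbits on $k$-element subsets. Since $S_d$ is $k$-homogeneous for every $k$, the equality $G = S_d$ makes each $\Phi_k$ connected, hence (being smooth over $U$) irreducible. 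On an irreducible variety a lower–semicontinuous function attains its maximum $m_k$ on a dense open set, so the locus $Z_k \subset \Phi_k$ where $c(S) < m_k$ is proper closed; as $\pi$ is finite, $\pi(Z_k)$ is closed of dimension $< \dim U$, hence a proper closed $U'_k \subsetneq U$. Removing these finitely many proper closed subsets — which does not affect the notion of a general hyperplane — gives that for general $H$ all $k$-subsets of $\Gamma = C \cap H$ impose the same number $m_k$ of conditions on $\mathcal{D}$, which is the theorem.

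\emph{Proof that $G = S_d$.} I would proceed in three steps. (i) \emph{Transitivity:} projecting $\Phi$ to $C$ realises it as a $\mathbb{P}^{r-1}$-bundle, the hyperplanes through a given point, so $\Phi$ is irreducible because $C$ is, and irreducibility of the cover is equivalent to transitivity of $G$. (ii) \emph{$2$-transitivity:} form $\Phi^{(2)} = \{(p,q,H) : p \ne q,\ p,q \in C \cap H\}$ and project to $C \times C \setminus \Delta$; the fibre over a pair of distinct points is the $\mathbb{P}^{r-2}$ of hyperplanes through both, so $\Phi^{(2)}$ is irreducible (here $r \ge 2$ suffices), which is equivalent to $G$ acting $2$-transitively. (iii) \emph{A transposition:} a general pencil of hyperplanes induces a degree-$d$ map $C \to \mathbb{P}^1$ all of whose branch points are simple, corresponding to hyperplanes simply tangent to $C$ at one smooth point and transverse elsewhere, and the local monodromy about such a point is a transposition, so $G$ contains a transposition. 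Finally I invoke the purely group-theoretic facts that a $2$-transitive group is primitive and that a primitive subgroup of $S_d$ containing a transposition equals $S_d$; combining (ii) and (iii) yields $G = S_d$.

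\emph{Main obstacle.} The delicate step is (iii): for a possibly singular $C$ one must verify, by a dimension count on the dual variety and the tangent developable, that a general pencil meets $C$ away from its singular locus and is at worst simply tangent, so that the branched cover genuinely acquires simple branch points and hence honest transpositions. This transversality analysis, together with the irreducibility computations of (i)–(ii), is precisely the content of Harris's monodromy theorem, which I would cite rather than reprove in full.
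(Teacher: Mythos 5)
The paper itself contains no proof of this statement: it is recalled in the preliminaries as a known fact --- it is Harris's classical uniform position theorem --- with no proof and not even an explicit reference, so there is no argument of the authors' to measure yours against. What you propose is the standard proof of that classical result, and it is essentially correct. The reduction is sound: the number of conditions is the rank of an evaluation map, hence attains its maximum on a dense open subset of each cover $\Phi_k$; once $G = S_d$, each $\Phi_k$ is connected (as $S_d$ is transitive on $k$-subsets) and smooth over $U$, hence irreducible, and finiteness of $\Phi_k \to U$ pushes the bad locus into a proper closed subset of $U$; intersecting over the finitely many values of $k$ finishes the argument. The computation of $G$ is also the standard one: transitivity from irreducibility of the incidence correspondence (a $\mathbb{P}^{r-1}$-bundle over $C$), $2$-transitivity from irreducibility of $\Phi^{(2)}$ (a $\mathbb{P}^{r-2}$-bundle over $C \times C$ minus the diagonal), a transposition from a simple tangency in a general pencil, and then the group-theoretic fact that a primitive subgroup of $S_d$ containing a transposition is all of $S_d$. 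Delegating the tangency analysis to Harris is entirely in keeping with the paper, which cites nothing at all here.

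Two caveats, neither fatal. First, your argument (like the classical theorem) yields the conclusion for $H$ in \emph{some} dense open subset of $U$, whereas the statement read literally with the paper's definition asserts it for every $H$ outside the specific locus $U'$ where the full section $C \cap H$ drops conditions; the monodromy argument does not give that literal claim. This mismatch is a defect of the paper's formulation, not of your proof: the way the theorem is actually invoked later (in Lemma 4.1) is exactly in the form you prove. Second, in step (iii) you say a general pencil ``meets $C$ away from its singular locus''; if this means no member of the pencil passes through $\mathrm{Sing}(C)$, it is false --- hyperplanes through a fixed point form a codimension-one linear subspace of $(\mathbb{P}^r)^*$, so every pencil contains such members. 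Fortunately this is also unnecessary: you only need \emph{one} branch point whose local monodromy is a transposition, and that is supplied by a transverse crossing of the dual hypersurface $C^*$ at a general point of it, which in characteristic zero corresponds to a hyperplane simply tangent at a single smooth point of $C$ and transverse elsewhere. The members through singular points may contribute whatever local monodromy they like; $2$-transitivity plus that single transposition already forces $G = S_d$. So the ``main obstacle'' is slightly mis-stated, but the proof as structured survives once phrased this way.
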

 
 \section{Existence of Ulrich Bundle}
\begin{lemma}\label{L1}
 Let $X$ be a surface with $\text{Pic}(X)=\mathbb{Z}[\mathcal{O}_{_X}(1)]$ and $K_{_X}=\mathcal{O}_{_X}(1)$ is very ample. Further assume that $q:= h^1(\mathcal{O}_X) < \chi -1$. Then there exists a zero dimensional 
 subscheme $\Gamma \subset X$ of length $4h^2+2\chi$ which satisfies Caley-Bachrach property with respect to the linear system $|\mathcal{O}_{_X}(5)|$. 
 \begin{proof}
  In view of Corollary \ref{C1}, we only need to  show that there exists a smooth curve $C\in |\mathcal{O}_{_X}(4)|$ and a base point free line bundle $L$ of degree $4h^2+2\chi$
  on $C$ with at least $2$ sections. 
  
  {\bf Steps to find line bundle $L$}:  For any curve $C\in |\mathcal{O}_{_X}(\alpha)|$ and a zero dimensional closed subscheme $B \subset C$ 
  we have the exact sequence 
  \[0\to I_{C, X}\to I_{B, X}\to I_{B, C}\to 0.\] Take $\alpha=4$ and $C\in |\mathcal{O}_{_X}(4)|$ an irreducible smooth curve. Then we have 
  
  \begin{equation}\label{1}
   0\to \mathcal{O}_{X}(-4)\to I_{B,X}\to I_{B, C}\to 0
  \end{equation}

  After tensoring the above exact sequence by $\mathcal{O}_{_X}(2)$ we get 
  \begin{equation}\label{2}
   0\to \mathcal{O}_{_X}(-2)\to I_{B,X}(2)\to \mathcal{O}_{_X}(2)|_{_C}\otimes \mathcal{O}_{_C}(-B)\to 0
  \end{equation}
  
  Note that $\text{deg}(\mathcal{O}_{_X}(2)|_{_C}\otimes \mathcal{O}_{_C}(-B))=8h^2-\text{length}(B)$.
  
  Our aim is now to find an irreducible curve $C\in |\mathcal{O}_{_X}(4)|$ and a closed subscheme $B\subset C$ of length $4h^2-2\chi$ such that the line 
  bundle $L:=\mathcal{O}_{_X}(2)|_{_C}\otimes \mathcal{O}_{_C}(-B)$ has at least $2$ sections and it is base point free.
  
  Take a smooth curve $C_0\in |\mathcal{O}_{_X}(2)|$. 
 Let  $Z_1:= H_1\cap C_0$ be a general hyperplane section with respect to the linear system $\mathcal{O}_X(4)_{\mid_{C_0}}$ (see section 3.2 for the definition of general hyperplane section with respect to a linear system). Then choose a hyperplane $H_2$ general with respect to the linear systems $\mathcal{O}_X(4)_{\mid_{C_0}}(-Z'_1)$, where $Z'_1$ is any subset in $Z_1$. Since there are only finitely many such subsets, such a choice of $H_2$ is always possible. Now take $Z_2$ as $C_0 \cap H_2$. 
 Note that we can make a choice of such $H_2$ such that $Z_1 \cap Z_2 = \varnothing$ 
 (that is always possible as a general $(n-2)$-plane do not intersect $C_0$).  Let $Z:= Z_1 \cup Z_2$. Then  $|Z|=4h^2$.\\
  We claim that there is a  subscheme $Z^{'} \subset Z$, of length $4h^2 -2 \chi$ such that for any  point 
  $P \in Z \setminus Z^{'}, h^0(X, I_{Z^{'}, X}(4)) > h^0(I_{Z^{'} \cup P, X}(4))$.\\
  {\bf Proof of the claim}:\\
  It is enough to show that there is subscheme $Z^{'} \subset Z$, of length $4h^2 -2 \chi$ such that for any  point $P \in Z \setminus Z^{'}, Z^{'}\cup P$ imposes independent conditions on $\mathcal{O}_X(4)$.\\
  Consider the exact sequence,
  \[
  0\to \mathcal{O}_{X}(2) \to I_{Z_1, X}(4) \to \mathcal{O}_X(4){\mid_{C_0}} \otimes \mathcal{O}_{C_0}(-Z_1) \to 0.
  \]
  Note that the sheaf $\mathcal{O}_X(4){\mid_{C_0}} \otimes \mathcal{O}_{C_0}(-Z_1)$ in the above exact sequence is isomorphic to $\mathcal{O}_X(3)\mid_{C_0}$. On the other hand, since the line bundle $\mathcal{O}_X(1)$ is ample, by Mumford's theorem $h^1(\mathcal{O}_X(-1)) =h^1(\mathcal{O}_X(2)) = 0$.
  Thus we have, $h^0(X, I_{Z_1, X}(4)) = h^0(X, \mathcal{O}_X(2)) + h^0(X, \mathcal{O}_X(3)\mid_{C_0})$. Now  from the natural exact sequence,
  \[
   0 \to \mathcal{O}_X(1) \to  \mathcal{O}_X(3) \to \mathcal{O}_X(3)\mid_{C_0} \to 0,
  \]
  we have $h^0(X, \mathcal{O}_X(3)\mid_{C_0}) = h^0(X, \mathcal{O}_X(3)) -h^0(X, \mathcal{O}_X(1)) + q$. 
  Thus we have 
  \[
  \begin{split}
  h^0(X, I_{Z_1, X}(4)) = h^0(X, \mathcal{O}_X(3)) + h^0(X, \mathcal{O}_X(2)) -h^0(X, \mathcal{O}_X(1)) + q  & \\
  = 3h^2 + \chi + h^2 + \chi - \chi +1 -q +q=4h^2 + \chi +1.
  \end{split}
  \]
  
  In other words, $Z_1$ imposes  $6h^2 + \chi - 4h^2 - \chi -1 =2 h^2 - 1$ independent conditions on sections of $\mathcal{O}_X(4)$. 
  Similarly, $Z_2$ imposes  $2h^2 -1$ independent conditions on sections of $\mathcal{O}_X(4)$.  
  Considering $Z_1$ and $Z_2$ together one can show that $Z$ imposes at least $4h^2 -\chi +1 - q$ independent conditions on sections of $\mathcal{O}_X(4)$.\\
  Since $q \le \chi -1, 4h^2 - 2\chi +2 \le 4h^2 - \chi +1 -q$,  there exists a set of  $4h^2 - 2\chi +2$ points in $Z$ which imposes independent 
  conditions on sections of $\mathcal{O}_X(4)$.\\
  Note that if a zero dimensional subscheme, $Y$ imposes $m$ conditions on $\mathcal{O}_X(d)$ and a  subset $Y_1$  of $Y$ imposes $|Y_1|$ conditions on $\mathcal{O}_X(d)$, then one can always extend $Y_1$ to a subset $Y_2$ of $Y$ of length $m$, such that $Y_2$ imposes independent conditions on $\mathcal{O}_X(d)$.\\
  Therefore, we can find a subset
  $Z^{'}$ of $Z$ of  $4h^2  -2\chi$ points intersecting both $Z_1$ and 
  $Z_2$ in at most  $2h^2 - 2$ points and imposes independent conditions on sections of $\mathcal{O}_X(4)$  in the following way:\\
  Choose a subset $\tilde{Z_1}$ of  $2h^2 -1$ points from $Z_1$ imposing independent conditions. Then by above observation, we can always extend $\tilde{Z_1}$ to a set $W$ of length $4h^2 -2\chi +2$  which imposes independent conditions. Note that the length of $W \cap Z_2$ is equal to $2h^2 -2\chi +3$. Since $q + 1 \le \chi,$ for any $q$ one can see that $\chi \ge 2$. Thus $2h^2-2\chi +3 \le 2h^2 -1$.\\
   Now take  $Z^{'}$ as $(W \cap Z_2) \cup \tilde{Z_1} \setminus \{\tilde{P}, \tilde{P}^{'}\}$ for some point $\tilde{P} \in \tilde{Z_1}$ and $\tilde{P}^{'} \in W \cap Z_2$.\\
  Let $P \in Z \setminus Z^{'}$ be an arbitrary point.\\ 
  {\bf Case (1)}: $P \in Z_1$. \\
  Let $Z_i^{'}:= Z_i \cap Z^{'}, i= 1, 2$. 
  Since $Z_1$ is a general hyperplane section of $C_0$ with respect to the linear systems  $|\mathcal{O}_X(4){\mid_{C_0}}|$, any point $Q \in Z_1 \setminus Z_1^{'}, Z_1^{'} \cup \{Q\}$ imposes 
   same number of conditions  on the linear series $|\mathcal{O}_X(4){\mid_{C_0}}|$. Thus the linear systems
   $|\mathcal{O}_X(4){\mid_{C_0}}  \otimes \mathcal{O}_{C_0}(-Z_1^{'}-\tilde{P})|$ and $|\mathcal{O}_X(4){\mid_{C_0}}  \otimes \mathcal{O}_{C_0}(-Z_1^{'}-P)|$ have the same dimension. Again since $Z_2$ is a general hyperplane section with respect to the linear systems $|\mathcal{O}_X(4){\mid_{C_0}}  \otimes \mathcal{O}_{C_0}(-Z_1^{'}-\tilde{P})|$ and $|\mathcal{O}_X(4){\mid_{C_0}}  \otimes \mathcal{O}_{C_0}(-Z_1^{'}-P)|,  |\mathcal{O}_X(4){\mid_{C_0}} \otimes \mathcal{O}_{C_0}(-Z_2^{'}) \otimes \mathcal{O}_{C_0}(-Z_1^{'}-\tilde{P})|$ and $|\mathcal{O}_X(4){\mid_{C_0}} \otimes \mathcal{O}_{C_0}(-Z_2^{'}) \otimes \mathcal{O}_{C_0}(-Z_1^{'}-P)|$ have same dimension. 
   Note that $Z^{'} \cup \{\tilde{P}\}$ imposes independent conditions on sections of $\mathcal{O}_X(4)$. Thus from the exact sequence 
   \[
  0\to \mathcal{O}_{X}(2) \to I_{Z^{'} \cup \{\tilde{P}, X\}}(4) \to \mathcal{O}_X(4){\mid_{C_0}} \otimes \mathcal{O}_{C_0}(-Z_2^{'}) \otimes \mathcal{O}_{C_0}(-Z_1^{'}-\tilde{P}) \to 0.
  \] 
  it follows that $Z^{'} \cup P$ imposes independent conditions on sections of $\mathcal{O}_X(4)$.\\
  {\bf Case (2)}: $P \in Z_2$.
  Since $Z_2$ is a general hyperplane section of $C_0$ with respect to the linear systems  $|\mathcal{O}_X(4){\mid_{C_0}} \otimes \mathcal{O}_{C_0}(-Z_1^{'})|$,  by Uniform Position Theorem, for any point $Q \in Z_2 \setminus Z_2^{'}, Z_2^{'} \cup \{Q\}$ imposes 
   same number of conditions  on the linear series $|\mathcal{O}_X(4){\mid_{C_0}} \otimes \mathcal{O}_{C_0}(-Z_1^{'})|$. \\
  Again $Z^{'} \cup \{\tilde{P}^{'}\}$ imposes independent conditions on $\mathcal{O}_X(4)$. Thus as earlier case from the exact sequence
  \[
  0\to \mathcal{O}_{X}(2) \to I_{Z^{'} \cup \{\tilde{P}^{'}\}, X}(4) \to \mathcal{O}_X(4){\mid_{C_0}} \otimes \mathcal{O}_{C_0}(-Z_2^{'}-\tilde{P}^{'}) \otimes \mathcal{O}_{C_0}(-Z_1^{'}) \to 0.
  \] 
  one can conclude the claim.
  
  
  Let $Z^{'} \subset Z$ be a subscheme of length $4h^2 - 2\chi$ which imposes independent condition on the linear system $|\mathcal{O}_{_X}(4)|$. We claim that a general element 
  $C\in |I_{Z',X}(4)|$ is smooth. To see this first we observe that, as $Z'$ imposes independent condition on $|\mathcal{O}_{_X}(4)|$, $h^1(I_{Z',X}(4))=0$. Also it is easy to see that $h^2(I_{Z',X}(3))=0$, use the exact sequence 
  \[0\to I_{Z',X}(3)\to \mathcal{O}_{X}(3)\to \mathcal{O}_X(3)|_{Z'}\to 0.\] Thus $I_{Z',X}(4)$ is $0$-regular in the sense of Castelnuovo-Mumford. Hence it is 
  globally generated. Blow up the surface $X$ at the points of 
  $Z'$. Then $f^*\mathcal{O}_{_X}(4)-\sum_i{_{=1}^{4h^2-2\chi}}E_i$ is globally generated where $f$ is the blow up morphism and $E_i$ are the exceptional divisor. We have $(f^*O_X(4)-E_1-...-E_k)^2 = 12h^2+2\chi > 0$. It follows from the Bertini theorem that the general members of the linear system $|f^*O_X(4)-E_1-...-E_k|$ are smooth. Pick one such curve and denote it by $\tilde{C}$. Then $C.E_i=1$ for all $i$. Hence $f|\tilde{C}:\tilde{C}\to X$ is an embedding. We denote the image of this curve by $C$. Thus we get a smooth curve $C\in |\mathcal{O}_{_X}(4)|$ passing through the points of $Z'$.
   Once we know the above fact by previous step we choose $C$, 
  such that we have $C \cap C_0 \cap H_1H_2 = Z^{'}$.\\
  Since $h^0(X, I_{Z^{'}, X}(2) \ge 2$, taking $B = Z^{'}$ in \ref{2}, we have a line bundle, namely, 
  $L:= \mathcal{O}_{_X}(2)|_{_C}\otimes \mathcal{O}_{_C}(-Z^{'})$ on $C$ which admits at least two sections. \\
  Take $D_1=C_0.C-Z^{'}$ and $D_2=H_1H_2.C-Z^{'}$. Then $D_1$ and $D_2$ are linearly independent effective divisors lying in the linear system 
  $|\mathcal{O}_{_X}(2)|_{_C}\otimes \mathcal{O}_{_C}(-Z^{'})|$. They are base point free since if $q$ is a base point then $q$ is in the support of 
  $C_0\cap H_1H_2 \cap C-Z^{'}$. But $C_0 \cap H_1H_2 \cap C-Z^{'}=0$. 
  
  Thus $|L|=|\mathcal{O}_{_X}(2)|_{_C}\otimes \mathcal{O}_{_C}(-Z^{'})|$ is a base point free linear system of positive dimension which proves the Lemma.
 \end{proof}

\end{lemma}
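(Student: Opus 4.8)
The plan is to invoke Corollary \ref{C1} with $m = 4$: since $K_X = \mathcal{O}_X(1)$ we have $K_X \otimes \mathcal{O}_X(4) = \mathcal{O}_X(5)$, so it is enough to exhibit a smooth curve $C \in |\mathcal{O}_X(4)|$ carrying a morphism $f \colon C \to \mathbb{P}^r$ for which a general $\Gamma \in |f^*\mathcal{O}_{\mathbb{P}^r}(1)|$ has length $4h^2 + 2\chi$. Equivalently, I want a base-point-free line bundle $L$ on $C$ of degree $4h^2 + 2\chi$ with $h^0(L) \geq 2$. To build such an $L$, I would restrict $\mathcal{O}_X(2)$ to $C$ and twist down by a zero-dimensional subscheme $B \subset C$: the sheaf $L := \mathcal{O}_X(2)|_C \otimes \mathcal{O}_C(-B)$ has degree $8h^2 - \mathrm{length}(B)$, which forces $\mathrm{length}(B) = 4h^2 - 2\chi$.

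First I would fix an auxiliary smooth curve $C_0 \in |\mathcal{O}_X(2)|$ and cut out on it two disjoint reduced hyperplane sections $Z_1 = H_1 \cap C_0$ and $Z_2 = H_2 \cap C_0$, chosen generically with respect to the relevant restricted linear systems so that the Uniform Position Theorem applies; their union $Z = Z_1 \cup Z_2$ has length $4h^2$. A Riemann--Roch computation, together with Mumford's vanishing $h^1(\mathcal{O}_X(-1)) = h^1(\mathcal{O}_X(2)) = 0$ and the evident restriction sequences, shows that each $Z_i$ imposes $2h^2 - 1$ independent conditions on $|\mathcal{O}_X(4)|$ and that $Z$ imposes at least $4h^2 - \chi + 1 - q$ of them. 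This is exactly where the hypothesis $q < \chi - 1$ enters: it gives $4h^2 - 2\chi + 2 \leq 4h^2 - \chi + 1 - q$, so $Z$ contains strictly more than $\mathrm{length}(B) = 4h^2 - 2\chi$ points imposing independent conditions.

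The technical heart, and the step I expect to be the main obstacle, is to extract from $Z$ a subscheme $Z'$ of length exactly $4h^2 - 2\chi$ that not only imposes independent conditions on $|\mathcal{O}_X(4)|$ but is generic along $Z$, in the sense that $Z' \cup \{P\}$ still imposes independent conditions for every $P \in Z \setminus Z'$. I would build $Z'$ by first choosing a set $\tilde{Z}_1 \subset Z_1$ of $2h^2 - 1$ points imposing independent conditions, extending it inside $Z$ to a subset $W$ of length $4h^2 - 2\chi + 2$ still imposing independent conditions, and then deleting one point of $\tilde{Z}_1$ and one of $W \cap Z_2$. The verification that adding back any $P \in Z \setminus Z'$ preserves independence splits according to whether $P \in Z_1$ or $P \in Z_2$; in each case, via a restriction sequence $0 \to \mathcal{O}_X(2) \to I_{Z' \cup \{P\}, X}(4) \to \mathcal{O}_X(4)|_{C_0}(-D) \to 0$, it reduces to the statement that the deleted point and $P$ impose the same number of conditions on a suitable restricted linear system --- which is precisely what the Uniform Position Theorem provides for the generic sections $Z_1, Z_2$.

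Finally I would promote $Z'$ to a smooth curve and read off $L$. Since $Z'$ imposes independent conditions, $h^1(I_{Z',X}(4)) = 0$, and a short computation gives $h^2(I_{Z',X}(3)) = 0$; hence $I_{Z',X}(4)$ is $0$-regular in the sense of Castelnuovo--Mumford and therefore globally generated. Blowing up $X$ at $Z'$ via $\pi$ and applying Bertini to the globally generated system $|\pi^*\mathcal{O}_X(4) - \sum_i E_i|$, whose self-intersection $12h^2 + 2\chi$ is positive, produces a smooth member meeting each exceptional divisor once, whose image is a smooth $C \in |\mathcal{O}_X(4)|$ through $Z'$. The genericity of $Z'$ then lets me choose $C$ with $C \cap C_0 \cap H_1 H_2 = Z'$ exactly. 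Taking $B = Z'$, the divisors $D_1 = (C_0 \cdot C) - Z'$ and $D_2 = (H_1 H_2 \cdot C) - Z'$ are linearly independent effective members of $|L|$ with no common base point, the latter precisely because $C_0 \cap H_1 H_2 \cap C = Z'$. Thus $L$ is base-point-free with $h^0(L) \geq 2$ and $\deg L = 4h^2 + 2\chi$, and the morphism $f \colon C \to \mathbb{P}^r$ it defines has a general reduced member $\Gamma \in |L| = |f^*\mathcal{O}_{\mathbb{P}^r}(1)|$ of length $4h^2 + 2\chi$ satisfying Cayley--Bacharach with respect to $K_X \otimes \mathcal{O}_X(4) = \mathcal{O}_X(5)$ by Corollary \ref{C1}, as required.
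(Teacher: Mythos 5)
Your proposal is correct and follows essentially the same route as the paper's own proof: the reduction via Corollary \ref{C1} to finding a smooth $C \in |\mathcal{O}_X(4)|$ with a base-point-free pencil $L = \mathcal{O}_X(2)|_C \otimes \mathcal{O}_C(-Z')$, the construction of $Z'$ inside two general hyperplane sections of an auxiliary curve $C_0 \in |\mathcal{O}_X(2)|$ using the Uniform Position Theorem, the smoothing of $C$ through $Z'$ via $0$-regularity and Bertini on the blow-up, and the verification of base-point-freeness from $C_0 \cap H_1H_2 \cap C = Z'$ all match the paper's argument step for step.
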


\begin{corollary}\label{C2}
 Under the hypothesis of Lemma \ref{L1} there exists a rank $2$ locally free sheaf $E$ which fits into the exact sequence 
 \[0\to \mathcal{O}_{_X} \to E\to I_{\Gamma, X}(4)\to 0\] where $\Gamma$ is a $0$ dimensional closed subscheme 
 of length $4h^2+2\chi$.
\begin{proof}
 By Theorem \ref{T5}, there exists a rank $2$ locally free sheaf fitting into the exact sequence 
 \[0\to \mathcal{O}_{_X} \to E\to I_{\Gamma, X}(4)\to 0\] if and only if there exists $0$-dimensional subscheme 
 $\Gamma \subset X$ satisfying Caley-Bacharach property with respect to $|\mathcal{O}_{_X}(4)\otimes K_{_X}|=|\mathcal{O}_{_X}(5)|$.
 By Lemma \ref{L1} there exists a $0$-dimensional subscheme $\Gamma \subset X$ of length $4h^2+2\chi$ which satisfies the Caley-Bacharach property with respect to the 
 linear system $|\mathcal{O}_{_X}(5)|$. Thus we are done. 
 
\end{proof}

\end{corollary}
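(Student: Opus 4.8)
The plan is to apply the Serre-type correspondence of Theorem \ref{T5} directly, once the twists are matched. Since $\mathrm{Pic}(X) = \mathbb{Z}[\mathcal{O}_X(1)]$ with $K_X = \mathcal{O}_X(1)$, the first thing I would record is the bookkeeping identity
\[
\mathcal{O}_X(4) \otimes K_X = \mathcal{O}_X(4) \otimes \mathcal{O}_X(1) = \mathcal{O}_X(5).
\]
This is precisely the twist that makes the two inputs fit together: Theorem \ref{T5}, applied with $L = \mathcal{O}_X(4)$, produces a rank-two extension $0 \to \mathcal{O}_X \to E \to I_{\Gamma,X}(4) \to 0$ exactly when the zero-dimensional subscheme $\Gamma$ has the Cayley-Bacharach property with respect to $L \otimes K_X = \mathcal{O}_X(5)$, and this is precisely the property produced by Lemma \ref{L1}.

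Concretely, Lemma \ref{L1} supplies a zero-dimensional subscheme $\Gamma \subset X$ of length $4h^2 + 2\chi$ satisfying CB with respect to $|\mathcal{O}_X(5)|$. Before invoking Theorem \ref{T5}, I would verify its standing hypothesis that $\Gamma$ be a local complete intersection. In the construction underlying Lemma \ref{L1}, $\Gamma$ appears as a general (hence reduced) divisor in the base-point-free linear system $|L|$ on the smooth curve $C \in |\mathcal{O}_X(4)|$; a reduced zero-dimensional subscheme of a smooth surface is automatically a local complete intersection, so this hypothesis is satisfied.

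With both hypotheses of Theorem \ref{T5} in hand, namely that $\Gamma$ is a zero-dimensional l.c.i.\ subscheme and that $\Gamma$ satisfies CB with respect to $\mathcal{O}_X(4) \otimes K_X$, the ``if'' direction of that theorem yields a rank-two locally free sheaf $E$ sitting in
\[
0 \to \mathcal{O}_X \to E \to I_{\Gamma,X}(4) \to 0,
\]
which is the assertion. I do not anticipate a genuine obstacle here: all the substantive work (producing the smooth curve, the base-point-free line bundle with two sections, and verifying Cayley-Bacharach) has already been discharged in Lemma \ref{L1}, so the corollary collapses to the identity $\mathcal{O}_X(5) = \mathcal{O}_X(4) \otimes K_X$ together with the l.c.i.\ observation. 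As a numerical sanity check that sets up the next stage of the argument, the sequence gives $c_1(E) = 4h = 3h + K_X$ and $c_2(E) = \mathrm{length}(\Gamma) = 4h^2 + 2\chi = \tfrac{5h^2 + 3K_X h}{2} + 2\chi$, which are exactly the Chern numbers demanded of a special Ulrich bundle in Corollary \ref{c1}; the remaining task in the program is then to show that a suitable such $E$ is initialized.
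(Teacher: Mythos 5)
Your proof is correct and follows essentially the same route as the paper: both apply the Serre correspondence of Theorem \ref{T5} with $L = \mathcal{O}_X(4)$, using the identity $\mathcal{O}_X(4)\otimes K_X = \mathcal{O}_X(5)$ and the subscheme produced by Lemma \ref{L1}. Your explicit verification that $\Gamma$ is a local complete intersection (being reduced on a smooth surface) is a point the paper leaves implicit, and is a worthwhile addition rather than a deviation.
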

\begin{proposition}\label{p2}
A general rank $2$ vector bundle appearing in the exact sequence of Corollary \ref{C2} is stable.
\end{proposition}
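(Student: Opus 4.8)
The plan is to exploit that $\text{Pic}(X)=\mathbb{Z}[\mathcal{O}_X(1)]$, so that slope–stability of a rank $2$ bundle is governed entirely by sub–line–bundles of the form $\mathcal{O}_X(k)$. Reading off Chern classes from the defining sequence $0\to\mathcal{O}_X\to E\to I_{\Gamma,X}(4)\to 0$ gives $c_1(E)=4h$, hence $\mu(E)=\tfrac12 c_1(E)\cdot h=2h^2$. Thus $E$ is stable precisely when it admits no nonzero map $\mathcal{O}_X(k)\to E$ with $k\ge 2$. Since $h$ is very ample, $\mathcal{O}_X(k-2)$ has a nonzero section for $k\ge 2$; multiplying a nonzero element of $H^0(E(-k))$ by such a section produces a nonzero element of $H^0(E(-2))$, so it suffices to prove $H^0(E(-2))=0$.

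Next I would convert this into a statement purely about $\Gamma$. A nonzero map $\mathcal{O}_X(2)\to E$ cannot factor through the sub $\mathcal{O}_X\subset E$, since that would give a nonzero section of $\mathcal{O}_X(-2)$; hence it maps nontrivially to $I_{\Gamma,X}(4)$, i.e. it is a conic through $\Gamma$. Concretely, twisting the defining sequence by $\mathcal{O}_X(-2)$ yields
\[
0\to \mathcal{O}_X(-2)\to E(-2)\to I_{\Gamma,X}(2)\to 0,
\]
and as $H^0(\mathcal{O}_X(-2))=0$ we obtain an injection $H^0(E(-2))\hookrightarrow H^0(I_{\Gamma,X}(2))$. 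Therefore the stability of $E$ reduces to the single vanishing $H^0(I_{\Gamma,X}(2))=0$, i.e. to the statement that $\Gamma$ lies on no member of $|\mathcal{O}_X(2)|$.

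To establish this for a general $\Gamma$ I would use a dimension count. By Riemann–Roch together with $h^1(\mathcal{O}_X(2))=0$ one has $h^0(\mathcal{O}_X(2))=h^2+\chi$, while $\text{length}(\Gamma)=4h^2+2\chi$ is strictly larger (recall $\chi\ge 2$). A zero–dimensional scheme whose length exceeds $h^0(\mathcal{O}_X(2))$ and which imposes independent conditions on $|\mathcal{O}_X(2)|$ satisfies $H^0(I_{\Gamma,X}(2))=0$ automatically. Since both imposing independent conditions and the vanishing $H^0(E(-2))=0$ are open conditions, by semicontinuity it is enough to produce a single admissible $\Gamma$ imposing the maximal $h^2+\chi$ conditions on conics; the general member of the family of Corollary \ref{C2} then inherits the vanishing and hence is stable.

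The delicate point—which I expect to be the real obstacle—is that $\Gamma$ is not free to be a generic set of points: it must satisfy the Cayley–Bacharach condition with respect to $|\mathcal{O}_X(5)|$ so that the rank $2$ extension of Theorem \ref{T5} and Corollary \ref{C2} exists and is locally free, and in Lemma \ref{L1} it is cut out on an auxiliary curve $C\in|\mathcal{O}_X(4)|$ as $\mathcal{O}_X(2)|_C\otimes\mathcal{O}_C(-Z')$. The genericity used in the dimension count must therefore be taken \emph{inside} the locus of Cayley–Bacharach subschemes of length $4h^2+2\chi$, and the crux is to check that the general such $\Gamma$ still imposes independent conditions on $|\mathcal{O}_X(2)|$, equivalently that it lies on no conic. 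I would attack this by letting the curve $C$ and the configuration defining $\Gamma$ vary, and showing that the subschemes lying on some member of $|\mathcal{O}_X(2)|$ form a proper subfamily; openness of stability would then yield the conclusion for the general bundle $E$.
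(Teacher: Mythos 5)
Your reduction is correct and clean: since $\operatorname{Pic}(X)=\mathbb{Z}[\mathcal{O}_X(1)]$ and $\mu(E)=2h^2$, stability is equivalent to $H^0(E(-2))=0$, and twisting the defining sequence gives an injection $H^0(E(-2))\hookrightarrow H^0(I_{\Gamma,X}(2))$ (in fact an isomorphism, because $H^0(\mathcal{O}_X(-2))=H^1(\mathcal{O}_X(-2))=0$ by Serre duality and Kodaira vanishing). But the entire content of the proposition is then concentrated in the step you leave as a plan: showing that a general admissible $\Gamma$ lies on no member of $|\mathcal{O}_X(2)|$. This step is not a routine genericity check, and the most natural way to carry it out --- exhibit one admissible $\Gamma$ off all conics and invoke openness --- provably cannot work with the $\Gamma$'s the paper actually constructs. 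In Lemma \ref{L1}, $\Gamma$ is a general member of the linear system $|\mathcal{O}_X(2)|_{C}\otimes\mathcal{O}_C(-Z')|$ on a curve $C\in|\mathcal{O}_X(4)|$; since $H^0(\mathcal{O}_X(-2))=H^1(\mathcal{O}_X(-2))=0$, the restriction map $H^0(I_{Z',X}(2))\to H^0(C,\mathcal{O}_X(2)|_C\otimes\mathcal{O}_C(-Z'))$ is an isomorphism, so \emph{every} member of that linear system is cut out on $C$ by a conic through $Z'$. Hence $H^0(I_{\Gamma,X}(2))\neq 0$ for every such $\Gamma$, and by your own isomorphism $H^0(E(-2))\neq 0$: all the bundles explicitly produced by Corollary \ref{C2} via Lemma \ref{L1} carry a subsheaf $\mathcal{O}_X(k)$, $k\geq 2$, and are never stable. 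So your "single example plus semicontinuity" scheme has no example to feed it, and the claim that the non-stable $\Gamma$'s form a proper subfamily is exactly what remains unproven.

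This is precisely why the paper's proof takes a different route. It works with the full space $\Sigma(X,4h^2+2\chi)$ of locally free extensions $0\to\mathcal{O}_X\to E\to I_{\Gamma,X}(4)\to 0$ as $\Gamma$ ranges over the whole Hilbert scheme (Corollary \ref{C2} serving only to prove non-emptiness), invokes the lower bound of Mestrano--Simpson \cite[Corollary 3.5]{SIM1} that \emph{every} component of $\Sigma(X,4h^2+2\chi)$ has dimension at least $2h^2+5\chi-1$, and then bounds from above the dimension of the locus of extensions admitting a destabilizing subbundle: for $\mathcal{O}_X(2)$-subbundles the bound is $5\chi+c+1$ (a count over $\operatorname{Hilb}^{2\chi}(X)$ and the relevant $\operatorname{Ext}^1$), and for $\mathcal{O}_X(3)$-subbundles it is $\tfrac{3h^2}{2}+3\chi+2q-1$ (here $\Gamma$ is forced onto a hyperplane section and a Clifford-type bound \cite[Theorem B]{ME} controls $h^1(I_{\Gamma,X}(5))$); the hypothesis $q<\chi-1$ makes both bounds smaller than $2h^2+5\chi-1$. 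In other words, stability of the general member is obtained by deforming away from the explicitly constructed (non-stable) bundles, component by component, not by verifying it on them. To complete your argument you would need a substitute for this dimension count --- for instance an upper bound on the dimension of the locus of Cayley--Bacharach schemes of length $4h^2+2\chi$ lying on a conic, valid on every component of the parameter space --- which is essentially the paper's Case 1 and Case 2 analysis in disguise.
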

\begin{proof}
 Let $\sum(X,4h^2+2\chi)$ denote the moduli variety of rank $2$ locally free extensions of $\mathcal{O}_{_X}$ by $I_{X,\Gamma}(4)$ where 
 $\Gamma\in Hilb^{4h^2+2\chi}(X)$. By Corollary \ref{C2} this variety is non empty. We now show that the general element of this variety is stable.
 
 Let $E\in \sum(X,4h^2+2\chi)$. Let $L$ be a proper subundle of  $E$ i.e., $E/L$ is torsion free and $\text{rank}(L)=1$. 
 If $L\subset \mathcal{O}_{_X}$ then $\mu(L)<\mu(E)$. Thus if $E$ is not stable then the  composition map  $L\to E \to I_{\Gamma, X}(4)$ is non-zero. 
  This would immediately imply $h^0(\mathcal{O}_{_X}(4h-c_1(L))\otimes I_{\Gamma})>0$ and hence, $4h-c_1(L)$ is an 
 effective divisor. As $\text{Pic}(X)=\mathbb{Z}[\mathcal{O}_{_X}(1)]$ we have $L=\mathcal{O}_{_X}(k)$, $k\in \mathbb{Z}$. Since, $4h-c_1(L)$ is effective we have $k\leq 4$. Clearly,
 $k\neq 4$ otherwise, $h^0(I_{\Gamma, X})>0$ which is impossible. So, if $E$ is not stable we get $k=2$ or $3$. 
 
 We will now determine maximum possible dimension of the space of those bundles appearing in the exact sequence \ref{C2} and have $\mathcal{O}_{_X}(k)$ as subbundles where 
 $k=2,3$. 
 
 Case 1: Let $k=2$. Then we have an exact sequence 
 \begin{equation}\label{C3}
 0\to \mathcal{O}_{_X}(2)\to E\to I_{Z, X}(2)\to 0.
 \end{equation}
 We have $c_2(E)=4h^2+2\chi$ and from the above exact sequence we get 
 $c_2(E)=4h^2+\text{length}(Z)$. Therefore, $\text{length}(Z)=2\chi$ and the Hilbert scheme $\text{Hilb}^{2\chi}(X)$ of length $2\chi$ subschemes has dimension 
 $4\chi$. On the other hand the set of all extensions of the form \ref{C3} is parametrized by 
 $\text{Ext}^1(I_{Z, X}(2),\mathcal{O}_{_X}(2))\simeq H^1(I_{Z, X}(1))^{*}$ (by Serre duality). 
 Now 
 \[h^1(I_{Z, X}(1))=c+h^0(\mathcal{O}_{_X}(1)|_{_Z})-h^0(\mathcal{O}_{_X}(1))+h^1(\mathcal{O}_{_X}(1).\] where $c:=h^0(I_{Z, X}(1)$.
 As $h^0(\mathcal{O}_{_X}(1)|_{_Z})=2\chi$ and $h^1(\mathcal{O}_{_X}(1))=q$ we have $h^1(I_{Z, X}(1))=\chi+c+1$.
 
 Therefore, the dimension of the space of all extensions of the 
 form \ref{C3} when $Z\in \text{Hilb}^{2\chi}(X)$, is at most $4\chi+\chi+c+1=5\chi +c+1$. 
On the other hand, by \cite[corollary 3.5]{SIM1}, every component of  vector bundles appearing in the exact 
 sequence \ref{C2}  has dimension 
at least, $3(4h^2 + 2 \chi) -h^0(X, \mathcal{O}_X(5)) -1 = 12h^2 + 6\chi -10h^2 -\chi -1= 2h^2 +5\chi -1$.  \\
If $2h^2 +5\chi -1>5\chi+c+1, i.e., 2h^2 > c+2$ then it follows the general vector bundles appearing in \ref{C2} can not have $\mathcal{O}_{_X}(2)$ as subbundles. We will show the above inequality.
 From the natural exact sequence,
\[
0 \to \mathcal{O}_X \to \mathcal{O}_X(1) \to \mathcal{O}_H(1) \to 0,
\]
where $H$ is a general hyperplane section, one has, $h^2 \ge 2\chi + 2q -6$. 
 Thus $2h^2\geq 4(\chi+q-3)$.  On the other hand, since any two points impose independent conditions on sections of $\mathcal{O}_X(1), c \le h^0(\mathcal{O}_X(1))= \chi + q -1-2=\chi +q -3$. Thus $2h^2 -c-2 
 \ge 4(\chi+q-3)-(\chi+q-3) -2=3\chi+3q-9$. As $\mathcal{O}_{X}(1)$ is very ample $h^0(\mathcal{O}_X(1)=\chi+q-1\geq 3$. Therefore,
 $\chi+q\geq 4$. Hence, $3(\chi+q)-9>0$.  
 Therefore, the general bundles appearing in \ref{C2} can not have $\mathcal{O}_{_X}(2)$ as subbundles. 
 
 Case 2: Let $k=3$. Then $\mathcal{O}_{_X}(3)\subset I_{X,\Gamma}(4)$. This would immediately imply $h^0(I_{X,\Gamma}(1))\neq 0$. Therefore, 
 $\Gamma \subset X\cap H$ for some hyperplane $H$. Now we will estimate the dimension of the space of all bundles appearing in the exact sequence 
 \ref{C2} and having $\mathcal{O}_{_X}(3)$ as subbundle. Let $S$ be the subscheme of $\sum(X,4h^2+2\chi)$ consists of those extensions 
 \[0\to \mathcal{O}_{_X}\to E\to I_{\Gamma, X}(4)\to 0,\] where $\Gamma \subset C$, $C\in |\mathcal{O}_{_X}(1)|$ and having $\mathcal{O}_X(3)$ as subbundle. 
 
 {\bf Claim}: $\text{dim}(S)\leq \frac{3h^2}{2}+3\chi-2+q$.\\
Let $\Gamma$ be a $0$ dimensional subscheme of length $4h^2+2\chi$. 
 Then the exact sequence of the form 
 \[0\to \mathcal{O}_{_X}\to E\to I_{\Gamma,X}(4)\to 0,\] are parametrised by 
 $\text{Ext}^1(I_{\Gamma,X}(4),\mathcal{O}_{_X})\simeq H^1(I_{\Gamma,X}(5))$. 
 Let $C\in |\mathcal{O}_{_X}(1)|$. 
 Then, as $\text{Pic}(X)\simeq \mathbb{Z}[\mathcal{O}_{_X}(1)$, it follows quite easily $C$ is an integral curve. Now we have an exact sequence 
 \begin{equation}\label{EZ}
 0\to I_{C,X}\to I_{\Gamma, X}\to I_{\Gamma, C}\to 0,
 \end{equation}
  of $\mathcal{O}_{_X}$ modules.
 Tensoring by $\mathcal{O}_{_X}(5)$ we get 
 \[0\to \mathcal{O}_{_X}(4)\to I_{\Gamma, X}(5)\to \mathcal{O}_{_X}(5)\otimes I_{\Gamma, C} \to 0,\]
 Now $\mathcal{O}_{_X}(5)\otimes I_{\Gamma, C}=I_{\Gamma, C}L$ where $L=\mathcal{O}_{_X}(5)|_{_C}$.
 The degree of $I_{C,\Gamma}L$  \[=5h^2-4h^2-2\chi=h^2-2\chi.\]
 and the degree of $K_{_C}$ is $2h^2$. Therefore, by the \cite[Theorem B]{ME}, we get 
 \[h^0(I_{\Gamma, C}L)\leq h^2/2-\chi+1.\]

 Thus $h^0(I_{\Gamma, X}(5))\leq h^0(\mathcal{O}_{_X}(4))+h^2/2-\chi+1$. This implies 
 $h^0(I_{X,\Gamma}(5))\leq 6h^2+\chi+h^2/2-\chi+1=6h^2+h^2/2+1$.
 
 On the other hand we have 
 \[0\to I_{\Gamma, X}\to \mathcal{O}_{_X}\to \mathcal{O}_{\Gamma}\to 0,\]
 Tensoring by $\mathcal{O}_{_X}(5)$ we get 
 \[0\to I_{\Gamma, X}(5)\to \mathcal{O}_{_X}(5)\to \mathcal{O}_{_X}(5)|_{_\Gamma}\to 0\]
 
 Therefore, $h^1(I_{\Gamma, X}(5))=h^0(\mathcal{O}_{_X}|_{\Gamma})+h^0(I_{\Gamma, X}(5))-h^0(\mathcal{O}_{_X}(5))$.
 This implies \[h^1(I_{\Gamma,X}(5))\leq 4h^2+2\chi+6h^2+h^2/2+1-10h^2-\chi=h^2/2+\chi+1.\]
 
 Let us count the dimension of the space of choices of   $\Gamma$ corresponding to  bundles in $S$.  There is a $\chi -1 +q$ dimensional
space of choices of the hyperplane sections $H$, and for each one we have an $4h^2 + 2\chi$ dimensional space
of choices of the subscheme $\Gamma$  of $4h^2 + 2\chi$ points in $H$. This gives the total dimension of such subschemes $\Gamma$ is $4h^2 + 2\chi + \chi -1+q= 4h^2 +3\chi -1+q$. \\
Thus the dimension of the space of all such extensions is $\le 4h^2 +3\chi -1+q + \frac{h^2}{2}+\chi +1-1=\frac{9h^2}{2} + 4\chi +q-1$.
 
 On the other hand, tensoring \ref{EZ}, by $\mathcal{O}_X(4)$, we have $h^0(X, I_{\Gamma,X}(4)) = 3h^2 + \chi$.
Thus $h^0(E) \ge 3h^2 + \chi +1 -q$. This means that
for a given bundle E, the space of choices of sections $s$ (modulo scaling) leading
to the subscheme of  $\Gamma \in \text{Hilb}^{4h^2 + 2\chi}(X)$, has dimension at least $ 3h^2 + \chi +1 - q -1= 3h^2 + \chi -q$. Hence the dimension of the space of bundles obtained by this construction is $3h^2 + \chi -q$ less than the dimension of the space of all extensions.

 Thus the maximum possible dimension of 
 $S$ is $ \frac{9h^2}{2} + 4\chi +q -1 -3h^2 -\chi +q=\frac{3h^2}{2} +3\chi +2q-1$.\\
 Again by  \cite[corollary 3.5]{SIM1}, every component of  $\sum(X,4h^2+2\chi)$ has dimension at least $2h^2+5\chi-1$.
 Since, under the assumption $q<\chi-1$,  $\frac{3h^2}{2}+3\chi-1+2q< 2h^2+5\chi-1$ we have $\text{dim}(S)<\text{dim}(\sum(X,4h^2+2\chi))$. Therefore, the general elements 
 of $\sum(X,4h^2+2\chi)$ are stable.



\end{proof}
Let $\mathcal{M}^s(2, 2h, h^2+2\chi)$ be the moduli space of stable vector bundle of rank $2$ with first Chern class $2h$ and second Chern class $h^2 + 2\chi$. 
By Proposition \ref{p2}, $\mathcal{M}^s(2, 2h, h^2+2\chi) \ne \varnothing$. Let
\[
V:= \{ E \in \mathcal{M}^s(2, 2h, h^2+2\chi) \text{ such that } h^0(E) \ne 0\}
\]
The following Proposition is a slight modification of Proposition 2.3 in \cite{SP} and Proposition 1 in \cite{PN}
\begin{proposition}\label{p3}
$\text{dim}(V) \le 2h^2 + 4\chi$ 
\end{proposition}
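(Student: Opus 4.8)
The plan is to use the Serre correspondence to realise every $E\in V$ as an extension, to organise these extensions into an incidence variety, and to bound its dimension by mapping to the Hilbert scheme of points. Write $\ell:=h^2+2\chi$ for the expected length.

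First I would fix $E\in V$ and a nonzero section $s\in H^0(E)$. Since $E$ is stable with $\mu(E)=h^2$, the section $s$ cannot vanish along a divisor: an effective divisor $kh$ with $k\ge 1$ would embed $\mathcal{O}_X(k)$ as a subsheaf of slope $\ge h^2=\mu(E)$, contradicting stability. Hence the zero scheme $Z$ of $s$ has dimension $0$ and length $c_2(E)=\ell$, and we obtain
\[
0\to \mathcal{O}_X\to E\to I_{Z,X}(2)\to 0.
\]
Thus the pair $(E,s)$ is recovered from $(Z,[\xi])$, where $[\xi]\in\mathbb{P}\,\mathrm{Ext}^1(I_{Z,X}(2),\mathcal{O}_X)$ is the class of the extension. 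Form the incidence variety $\mathcal{P}=\{(E,[s]):E\in V,\ 0\ne s\in H^0(E)\}/\mathbb{C}^*$. The projection $\mathcal{P}\to V$ is surjective with fibres $\mathbb{P}H^0(E)$, so $\dim V\le\dim\mathcal{P}$, and the assignment $(E,[s])\mapsto(Z,[\xi])$ is injective; it therefore suffices to bound the dimension of the parameter space of pairs $(Z,[\xi])$.

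Next I would compute the fibre dimension cohomologically. By Serre duality $\mathrm{Ext}^1(I_{Z,X}(2),\mathcal{O}_X)\cong H^1(I_{Z,X}(3))^*$, using $K_X=\mathcal{O}_X(1)$. As $\mathcal{O}_X(3)=K_X\otimes\mathcal{O}_X(2)$ with $\mathcal{O}_X(2)$ ample, Kodaira vanishing gives $h^1(\mathcal{O}_X(3))=h^2(\mathcal{O}_X(3))=0$, whence $h^0(\mathcal{O}_X(3))=3h^2+\chi$ and $h^2(I_{Z,X}(3))=0$; Riemann--Roch then yields $h^0(I_{Z,X}(3))-h^1(I_{Z,X}(3))=2h^2-\chi$. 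Consequently, on the locus where $h^1(I_{Z,X}(3))=k$ the fibre of $\mathcal{P}\to\mathrm{Hilb}^{\ell}(X)$ has dimension at most $k-1$, while $h^0(I_{Z,X}(3))=2h^2-\chi+k$. Note that $k\ge 1$ for every $Z$ that occurs, since the extension class is nonzero; under the numerical hypotheses one has $2h^2\ge\chi$, so $h^0(I_{Z,X}(3))\ge 1$ and $Z$ lies on a member of $|\mathcal{O}_X(3)|$.

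Finally I would bound the base stratum through the incidence $\{(Z,D):D\in|\mathcal{O}_X(3)|,\ Z\subseteq D\}$, which has dimension at most $\dim|\mathcal{O}_X(3)|+\ell=(3h^2+\chi-1)+(h^2+2\chi)$ and whose fibre over $Z$ is $\mathbb{P}H^0(I_{Z,X}(3))$. Subtracting the fibre dimension $m-1$ gives
\[
\dim\{Z:\ h^0(I_{Z,X}(3))\ge m\}\le 4h^2+3\chi-m.
\]
Taking $m=2h^2-\chi+k$ and adding the fibre contribution $k-1$ to $\mathcal{P}$ yields, on each stratum, the uniform bound $(2h^2+4\chi-k)+(k-1)=2h^2+4\chi-1$; hence $\dim\mathcal{P}\le 2h^2+4\chi-1$ and $\dim V\le 2h^2+4\chi$. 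The main obstacle is the uniform estimate $\dim\mathrm{Hilb}^{\ell}(D)=\ell$ used for the incidence variety: this is immediate for integral $D$ (general members of $|\mathcal{O}_X(3)|$ are smooth by very ampleness), but one must check that non-reduced or reducible divisors, where the punctual Hilbert scheme can be larger, lie in a locus whose total contribution does not exceed the stated bound. Restricting to the dense locus of reduced $Z$, to which the general $E\in V$ gives rise, makes this precise and completes the estimate.
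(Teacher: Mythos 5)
Your argument is essentially the paper's: realize each $E\in V$ as a Serre extension $0\to\mathcal{O}_X\to E\to I_{Z,X}(2)\to 0$, pass to the incidence variety of pairs $(E,[s])$, compute the fibre dimension as $h^1(I_{Z,X}(3))-1$ via Serre duality, stratify the Hilbert scheme by $h^0(I_{Z,X}(3))$, and bound each stratum through the incidence correspondence with $|\mathcal{O}_X(3)|$. The numerology agrees with the paper's (your count is in fact marginally sharper, $2h^2+4\chi-1$ versus $2h^2+4\chi$, because you correctly add the fibre dimension $k-1$ rather than $k$), and your stability argument showing the zero scheme of a section is $0$-dimensional makes explicit a point the paper leaves implicit.

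The one genuine defect is the fix you propose in the last paragraph. When the goal is an \emph{upper} bound on $\dim V$, you cannot restrict attention to ``the dense locus of reduced $Z$, to which the general $E\in V$ gives rise'': a priori $V$ could have components on which every section of every bundle has a non-reduced zero scheme, and nothing in the construction excludes this, so the restricted estimate would not bound those components. Fortunately the worry itself is unnecessary. By Brian\c{c}on's theorem, the punctual Hilbert scheme of length-$m$ subschemes of a smooth surface supported at a fixed point has dimension $m-1$; since $\mathrm{Hilb}^m_p(D)\subseteq \mathrm{Hilb}^m_p(X)$ for any divisor $D\subset X$, stratifying $\mathrm{Hilb}^{\ell}(D)$ by the multiplicities $(m_1,\dots,m_r)$ of the support points (each of which moves in the one-dimensional $D$) gives $\dim\mathrm{Hilb}^{\ell}(D)\le \sum_i\bigl(1+(m_i-1)\bigr)=\ell$ for \emph{every} curve $D$ in $X$, reduced or not, reducible or not. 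This uniform bound is exactly what the paper implicitly invokes when it asserts $\dim\pi_1^{-1}(C)\le h^2+2\chi$, and with it your estimate closes without any genericity assumption on $Z$.
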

\begin{proof}
  Let $E \in V$. Then $E$ fits into an exact sequence 
  \[
   0 \longrightarrow \mathcal{O}_X \longrightarrow E \longrightarrow I_Z(2) \longrightarrow 0,
  \]
where $Z \in Hilb^{h^2 + 2\chi}(X)$ and $I_Z$ is the ideal sheaf corresponding to $Z$.
Let $\mathcal{N}(V)$ be the space of pairs
\[
 \mathcal{N}(V)= \{(E, s): E \in V, s \in \mathbb{P}(H^0(X, E))\}.
\]
Consider the following diagram
 \[
\xymatrix{
& \mathcal{N}(V) \ar[r]^{p_2} \ar[d]^{p_1} & \text{Hilb}^{h^2+2\chi}(X)\\
& V\\
}  
\]
Clearly $p_1$ is surjective. Thus $\text{dim }V \le \text{dim}(\mathcal{N}(V))$. On the other hand, 
\[
 \text{dim }p_2^{-1}(Z) = \text{dim }\mathbb{P}(\text{Ext}^1(I_{Z,X}(2), \mathcal{O}_{_X}))= h^1(I_Z(3))-1.
\]
Note that from the canonical exact sequence 
\[
 0 \longrightarrow I_Z(3) \longrightarrow \mathcal{O}_{_X}(3) \longrightarrow \mathcal{O}_Z(3) \longrightarrow 0,
\]
one can conclude that 
 $\mathbb{P}(\text{Ext}^1(I_Z(2), \mathcal{O}))$ is non-empty if and only if $ h^0(I_Z(3)) \ge 2h^2 -\chi +1$. \\

Let $\Delta_i = \{ Z \in \text{Hilb}^{h^2 + 2\chi}(X): h^0(I_Z(3)) =2h^2 -\chi + i\}$
and $\Delta = \cup_i\Delta_i$.  Thus $p_{_2}(\mathcal{N}(V)) \subset \Delta$.


Consider the incidence variety $T = \{(C, Z): Z \subset C\} \subset \mathbb{P}(H^0(\mathcal{O}_X(3))) \times \text{Hilb}^{h^2 + 2\chi}(X)$
and let $\pi_1, \pi_2$ are projections. Then dimension
of
$\pi_1^{-1}(C)$ is at most $h^2 + 2\chi$, so 
$3h^2 + \chi-1 + h^2 + 2\chi  \ge \text{dim }T \ge \text{dim }\pi_2^{-1}(\triangle_i) \ge \text{dim }\triangle_i +2h^2 - \chi + i-1$.
This implies that $\text{dim }\triangle_i$ is bounded above by $2h^2 + 4\chi-i$. 
Thus the $\text{dim}(p_2^{-1}(\triangle_i)) \le 2h^2 + 4\chi-i  + i= 2h^2 + 4\chi$, which is independent of $i$. \\
Thus the $\text{dim}(\mathcal{N}(V)) \le 2h^2 +4\chi$.
\end{proof}

\subsection {proof of Theorem \ref{T1}}
Note that by Proposition \ref{p2}, the space of stable  bundles appearing in the exact sequence, 
\[
0 \to \mathcal{O}_X(-1) \to E \to I_{\Gamma, X}(3) \to 0,
\]
where $\Gamma$ is a zero dimensional subscheme of length $4h^2 +\chi$, is non-empty. By \cite[corollary 3.5]{SIM1}, every component of such bundles has dimension 
at least, $3(4h^2 + 2 \chi) -h^0(X, \mathcal{O}_X(5)) -1 = 12h^2 + 6\chi -10h^2 -\chi -1= 2h^2 +5\chi -1$. On the other hand, by
Proposition \ref{p3} the dimension stable vector bundles with at least a non-zero section is at most $2h^2 +4\chi$. 
Thus a general element in the moduli space do not admit any section. 
Thus for a general element $E \in \mathcal{M}^s(2, 2h, h^2 + 2\chi)$, $E(1)$ satisfies the hypothesis of corollary \ref{C1}, hence Ulrich. 
\begin{remark}
The hypothesis of the theorem is not a strong hypothesis. There 
are interesting family of examples satisfying the hypothesis. For example any general quintic hypersurface in $\mathbb{P}^3$ certainly satisfies our hypothesis. 

\end{remark}
\begin{remark}\label{RZ}
From the proof of Theorem \ref{T1}, It is clear that a general element in every component of $\Sigma(X, 4h^2 + 2\chi)$ is Ulrich.
\end{remark}
\begin{remark}
Proof of Theorem \ref{T1}, may work under a weaker hypothesis that the canonical bundle is only ample (need not be very ample). In fact,  in this situation, there is an integer $m$ such that $K_X^{\otimes m}$ is very ample. Then considering  $h:= K_X^{\otimes m}$ and using the embedding of $X$ via $h$  probably one can show the existence of Ulrich bundle with respect to $h$. 
\end{remark}
\section{Ulrich Wildness of $X$}
In this section we will prove the Theorem \ref{T2}.\\
{\bf Proof of Theorem \ref{T2}}:\\
By remark \ref{RZ}, a general element in $\Sigma(X, 4h^2 + 2\chi)$ is Ulrich. Also every irreducible component of $\Sigma(X, 4h^2 + 2\chi)$ has dimension at least $2h^2 +5\chi -1 > 0$. Let $A$ and $B$ be two non-isomorphic stable Ulric bundles.  Thus there does not exist any non-zero morphism from $A$ to $B$. \\
Therefore, by \ref{T3}, it is enough to prove that $h^1(X, A \otimes B^*) \ge 3$.\\
Note that if $E$ and $F$ be rank 2 vector bundles on a smooth projective complex surface $X$. Then one has
\begin{equation*}
  c_2(E \otimes F) = c_1(E)^2 + c_1(F)^2 + 2 c_2(E) + 2 c_2(F) + 3 c_1(E)c_1(F).
\end{equation*}
Taking $F = B^*$ and $E = A$, we have,
$c_2(A \otimes B^*) = 8\chi$.  \\
Thus $\chi(A \otimes B^*)= -8\chi + 4\chi=-4\chi$.  Therefore, $h^1(X, A \otimes B^*) \ge 4\chi \ge 3$ and we are done.


\begin{thebibliography}{1111}
\bibitem{AFO} M. Aprodu, G. Farkas, A. Ortega: Minimal resolutions, Chow forms and Ulrich bundles on K3 surfaces. J. Reine Angew. Math. 730 (2017), 225-249.
\bibitem{AB1}A. Beauville: Ulrich bundles on abelian surfaces. Proc. Amer. Math. Soc. 144 (2016), 4609-4611.
\bibitem{AB2} A. Beauville: Ulrich bundles on surfaces with $p_g = q = 0$. Available at arXiv:1607.00895
[math.AG].
\bibitem{AB3} A. Beauville: An introduction to Ulrich bundles. Available at arXiv:1610.02771 [math.AG].
\bibitem{GC1} G. Casnati: Special Ulrich bundles on non–special surfaces with $p_g = q = 0$. Int. J. Math. 28
(2017), 1750061.
\bibitem{GC2} G. Casnati, F. Galluzzi: Stability of rank 2 Ulrich bundles on projective K3 surface. Available
at arXiv:1607.05469 [math.AG]. 
\bibitem{GC3} G. Casnati: On the existence of Ulrich bundles on geometrically ruled surface. Preprint.
\bibitem{GC4} G. Casnati: Ulrich bundles on non-special surfaces with $p_g=0 \text{ and }  q=1$. Preprint arXiv:1707.06429.
\bibitem{GC5} G.Casnati: Special Ulrich bundles on regular surfaces with non-negative Kodaira dimension. Preprint arXiv:1809.08565.
\bibitem{ES} D. Eisenbud, F.-O. Schreyer : Resultants and Chow forms via exterior syzygies. J. Amer. Math. Soc. 16 (2003), no. 3, 537-579.
\bibitem{AR}  Ronnie Sebastian, Amit Tripathi: Rank 2 Ulrich bundles on general double plane covers. Preprint arXiv:2010.06290.
 \bibitem{HL} Huybrechts, D; Lehn, M. {The geometry of moduli spaces of sheaves}, Cambridge University Press, 2010.
 \bibitem{FP} D. Faenzi, J. Pons-Llopis: The CM representation type of projective varieties. Available at
arXiv:1504.03819 [math.AG].
\bibitem{HLU} James Hotchkiss, Chung Ching Lau, Brooke Ullery: The gonality of complete intersection curves. Preprint arXiv:1706.08169.
\bibitem{ME} M. Franciosi, E. Tenni: On Clifford’s theorem for singular curves, Proc.  London Math. Soc. (3) 108 (2014) 225–252
\bibitem{SIM1} Mestrano, Nicole; Simpson, Carlos:  Obstructed bundles of rank two on a quintic surface. Internat. J. Math. 22 (2011), no. 6, 789-836.   
\bibitem{PN} Pieter Nijsse. The irreducibility of the moduli space of stable vector bundles of rank 2 on a quintic in P3. Preprint http://arxiv.org/pdf/alg-geom/9503012.pdf.
\bibitem{PP} A.J Parameswaran; Poornapushkala Narayan : Ulrich line bundles on double plane. Journal of Algebra, Vol: 583, pages: 187-208.
\bibitem{SP} Sarbeswar Pal: Irreducibility of moduli of vector bundles over a very general sextic Surface preprint arXiv:2003.07036 .          
\bibitem{UL} B. Ulrich : Gorenstein rings and modules with high numbers of generators. Math. Z. 188 (1984), no. 1, 23-32.
 

                         

 


\end{thebibliography}
\end{document}